\theoremstyle{plain}
\newtheorem{thm}{Theorem}
\newtheorem{lem}{Lemma}
\newtheorem{prop}[thm]{Proposition}
\newtheorem{cor}{Corollary}
\theoremstyle{definition}
\newtheorem{conj}{Conjecture}
\title{Tiling Triangles With $2\pi/3$ Angles}
\author{Yan X Zhang}
\address{San Jos\'e State University}
\date{\today}
\begin{document}

\maketitle

\begin{abstract}
    Motivated by a question of Erd\"{o}s and inquiries by Beeson and Laczkovich, we explore the possible $N$ for which a triangle $T$ can tile into $N$ congruent copies of a triangle $R$. The \emph{reptile} cases (where $T$ is similar to $R$) and the \emph{commensurable-angles} cases (where all angles of $R$ are rational multiples of $\pi$) are well-understood. We tackle the most interesting remaining case, which is when $R$ contains an angle of $2\pi/3$ and when $T$ is one of $6$ ``sporadic'' specific triangles, of which only $2$ were known to have constructions.  For each of these, we create a family of constructions and conjecture that they are the only possible $N$ that occur for these triangles. 
\end{abstract}

\section{Introduction}

We say that a polygon $T$ \emph{tiles into} ($N$ copies of) polygon $R$ if $T$ is a disjoint union of congruent copies of $R$, in which case we call $R$ the \emph{tile}. Our motivating problem is

\begin{tcolorbox}
For which triples $(N,T,R)$ does a triangle $T$ tile into $N$ copies of a triangle $R$?
\end{tcolorbox}

This problem has been extensively studied by Beeson (\cite{beeson-seven}, \cite{beeson-equilateral}, \cite{beeson-isosceles}, \cite{beeson-triangletiling3}); there is also a significant overlap with  Laczkovich's program of understanding tilings of triangles $T$ into triangles \textbf{similar} to some $R$. Furthermore, this problem is a higher-resolution version of an open (\$25) Erd\"{o}s problem \cite{soifer2009}:

\begin{tcolorbox}
    For which $N$ does there exist a tiling of some triangle $T$ into some triangle $R$?
\end{tcolorbox}

Two well-studied special cases of our problems are:
\begin{enumerate}
    \item The problem is completely understood for the case of \emph{reptiling}  \cite{golomb1964replicating} (when $T$ and $R$ are similar). In this case the families $N = k^2$ (for all triangles) and $h^2+k^2$ and $3k^2$ (for right triangles) appear.
    \item The problem is well-understood if the tile $R$ has \emph{commensurable angles}\footnote{This notion is due to Laczkovich \cite{laczkovich1995tilings}, who gives a finite classification for the generalization where the tiles only need to be similar instead of congruent.}, meaning when all the angles of the tile are rational multiples of $\pi$. See e.g. \cite{beeson-seven} for what tilings arise; a few extra families of potential values of $N$ (specifically, $N=3k^2$ and $6k^2$) appear involving specific triangles. 
\end{enumerate}

For the rest of our paper, we assume that we have \emph{incommensurable angles} (that is, we do not have commensurable angles). The remaining cases take a finite number of forms (see \cite{beeson-seven}), each obeying some linear equation involving $\pi$. For each of these, the angles of the tiled triangle are also constrained. See Figure~\ref{fig:beeson-incommensurable} for a list.

\begin{figure}
    \centering
    \includegraphics[scale=0.8]{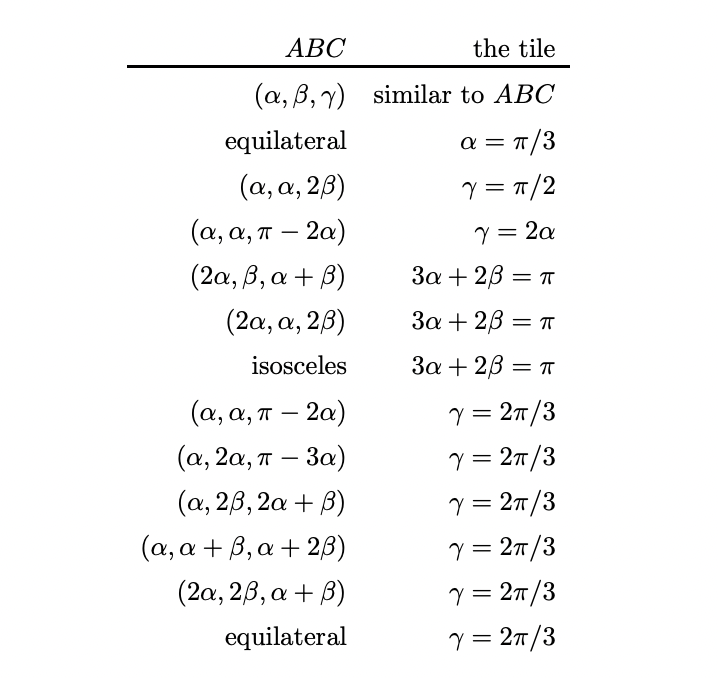}
    \caption{A List of the possible incommensurable-angles cases, where $ABC$ is the triangle being tiled. This table is taken (with permission) from Beeson \cite{beeson-seven}, though it contains the same content as Theorem 4.1 of \cite{laczkovich1995tilings}.}
    \label{fig:beeson-incommensurable}
\end{figure}

Very little is known in general about the incommensurable angles cases:
\begin{enumerate}
    \item Herdt (communicated through works of Beeson) has some specific constructions for the equilateral and isosceles triangles; they contain the smallest known\footnote{Our constructions in this paper independently reproduce Herdt's constructions, although we note that Herdt's constructions remain the smallest $N$ in their respective families.} $N$ for those cases with $R$ having an angle of $2\pi/3$, and one with an angle of $\pi/3$.  
    \item Beeson proved that $N=7$ is impossible \cite{beeson-seven} and no prime $N$ is possible in many of the cases (see \cite{beeson-seven}, \cite{beeson-equilateral}, \cite{beeson-isosceles}, \cite{beeson-triangletiling3}), sometimes ruling out bigger families of $N$ such as squarefree numbers. For the case when one of the angles of $R$ is $2\pi/3$ and $T$ is isosceles, Beeson \cite{beeson-isosceles} gives a lower bound of $2736$ for potential $N$.
    \item Laczkovich \cite{laczkovich2020rational} related possible $N$ for equilateral triangles to rationality of points on an elliptic curve.
\end{enumerate} 

In our work, we focus on the $\gamma = 2\pi/3$ case, which appears the most frequently ($6$ times) as subcases of Figure~\ref{fig:beeson-incommensurable}, and the closely related $\pi/3$ case. Our main contribution is proving the existence of $N$ for a large family of tilings for the incommensurable angles cases involving $2\pi/3$. This seems to be the first known construction for $3$ of those $6$ cases\footnote{In \cite{laczkovich1995tilings}, which applied to tilings by similar instead of congruent tiles, Laczkovich outlined how constructions would exist, but did not concretize them.}. We also conjecture that this construction solves the question of what $(N,T,R)$ are possible (when $R$ satisfies our conditions) for sufficiently big $N$; we prove our conjecture under certain assumptions for $T$ being an equilateral triangle.

In Section~\ref{sec:prelims} we give preliminary knowledge and assumptions, including our main tool \emph{ideal trapezoids}. In Section~\ref{sec:equilateral} we give our main construction and result in Theorem~\ref{thm:equilateral-construction-1}. In Section~\ref{sec:construction} we show how our construction can be used to produce families of tilings for all the incommensurable-angles cases is Figure~\ref{fig:beeson-incommensurable}. We end with some remarks in Section~\ref{sec:conclusion}. 

\section {Preliminaries}
\label{sec:prelims}

For this paper, unless specified, we assume the tile $R$ is a triangle with side lengths $(a,b,c)$ and corresponding angles $(\alpha, \beta, \gamma)$. Except for one subsection (where we extend our techniques to the $\gamma = \pi/3$ case), we will assume $\gamma=2\pi/3.$ 

\subsection{Integrality Assumptions}

If the pairwise ratios of $(a, b, c)$ are all rational, we say that the tile has \emph{commensurable sides} (this is sometimes just called \emph{commensurable} in the literature, but we emphasize "side" to avoid confusion with angles). A tile with commensurable sides can be scaled to be integers (in fact, pairwise coprime), turning the $c^2 = a^2 + b^2 + ab$ (by the law of cosines) relationship into a diophantine equation. \textbf{For the rest of this paper, we assume that $(a,b,c)$ are all integers.}

This seems to be a very strong restriction, but it ends up being necessary\footnote{In an older version of this manuscript we set this up as a conjecture, before it was subsequently proven.}. Our recent joint work with Beeson gave the following result, extending similar results in \cite{laczkovich2012tilings} and \cite{beeson-isosceles}: 

\begin{thm}\cite[Theorem 1.2]{rationality}
    \label{thm:rationality}
 Let triangle $T$ be tiled by a tile $R$ such that
    \begin{itemize}
        \item $R$ is not similar to $T$;
        \item $R$ is not a right triangle;
        \item $R$ has incommensurable angles.
    \end{itemize}
    Then $R$ must have commensurable sides.
\end{thm}

While this result is not necessary for this work (which is all constructive), it tells us that we are not losing any information by assuming integrality of the tile!

\subsection{Ideal Trapezoids}
\label{sec:trapezoids}

We say that trapezoid $ABCD$ is \emph{ideal} if $AB$ is parallel to $CD$ and $DAB = ABC = \pi/3$. In such a trapezoid, we use $x(ABCD)$ to denote the length of the shorter parallel side $|CD|$ and $y(ABCD)$ to denote the longer $|AB|$, and we abbreviate them as simply $x$ and $y$ when the context is clear. We first explore which trapezoids are tileable by $(a,b,c)$.

\begin{lem}
    \label{lem:trapezoid-1} Let $ABCD$ be an ideal trapezoid.  Then if $x = a^2 + b^2$ and $y = ab$, $ABCD$ can be tiled by $(a,b,c)$.
\end{lem}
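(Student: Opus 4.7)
The plan is to build an explicit tiling of $ABCD$ by copies of the tile $R=(a,b,c)$. The first step is to identify the fundamental building block: two copies of $R$, glued along their common $c$-side by a $180^\circ$ rotation about its midpoint, form a parallelogram $P_{a,b}$ with side lengths $a$ and $b$ and interior angles $\pi/3, 2\pi/3$. Since $P_{a,b}$ has exactly the same set of interior angles as an ideal trapezoid, it is the natural atomic piece for filling the interior.

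I would next set up coordinates, placing the longer base along the $x$-axis, so that the legs have length equal to the difference of the two parallel sides and make angle $\pi/3$ with that base, and the shorter parallel side sits at height (leg)$\cdot\sqrt{3}/2$. A central algebraic observation is the identity $x+y = a^2 + ab + b^2 = c^2$, which ties the total horizontal extent of the trapezoid to the tile's $c$-side and strongly constrains how copies of $P_{a,b}$ can pack inside.

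The explicit tile placement I envision handles the four corners separately and fills the interior with a grid of $P_{a,b}$'s. At each obtuse corner $C$ and $D$ (interior angle $2\pi/3$), one tile is placed with its $\gamma$-vertex at the corner and its $a$- and $b$-sides running along the two adjacent edges; the corner angle is then filled by a single $\gamma$. At each acute corner $A$ and $B$ (interior angle $\pi/3 = \alpha+\beta$), two tiles meet, one contributing its $\alpha$-vertex and the other its $\beta$-vertex. The remaining interior region is then filled with a grid of parallelograms $P_{a,b}$, oriented with one pair of sides parallel to $AB$ and the other pair parallel to the $\pi/3$-legs.

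The main obstacle I expect is verifying that this interior grid meshes cleanly with the four corner pieces, with no gaps or overlaps and with the right total tile count. This reduces to a row-by-row bookkeeping: each horizontal strip of the trapezoid at heights that are multiples of $b\sqrt{3}/2$ (or $a\sqrt{3}/2$, depending on orientation) should admit exactly an integer number of $P_{a,b}$'s, with the tapering at each end absorbed by the two legs at angle $\pi/3$. Making this count come out correctly should ultimately rely on the identity $x+y = c^2$ together with the given expressions for $x$ and $y$ in terms of $a$ and $b$, which encode precisely how the parallelogram grid interacts with the sloped sides of the trapezoid.
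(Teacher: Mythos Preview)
Your plan does not match the paper's argument, and more importantly it has a genuine gap. The paper does \emph{not} tile $ABCD$ with a grid of $P_{a,b}$'s plus a few corner tiles. Instead it picks a single point $E$ on the short side $CD$ with $|DE|=a^{2}$ and $|EC|=b^{2}$, and observes that the three triangles $ADE$, $BEC$, $AEB$ are similar to $R$ with integer scaling factors $a$, $b$, $c$ respectively (check: $|AD|=ab$ and $|DE|=a^{2}$ around the $2\pi/3$ angle at $D$ force $|AE|=ac$; likewise $|BE|=bc$; then $|AE|=ac$, $|BE|=bc$ around the resulting $2\pi/3$ angle at $E$ force $|AB|=c^{2}$). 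Each of these reptiles into $a^{2}$, $b^{2}$, $c^{2}$ copies of $R$, for a total of $a^{2}+b^{2}+c^{2}$ tiles. That is the whole proof.

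Your approach, by contrast, is only a sketch, and the sketch cannot be completed as stated. A clean obstruction: with your six corner tiles (one at each of $C,D$ and two at each of $A,B$) and everything else coming from parallelograms $P_{a,b}$ (two tiles apiece), the total tile count is always even. But the true count is $a^{2}+b^{2}+c^{2}=2(a^{2}+b^{2})+ab$, which is odd whenever $a$ and $b$ are both odd; for $(a,b,c)=(3,5,7)$ the trapezoid needs $83$ tiles. So ``corner singletons plus a $P_{a,b}$ grid'' is impossible in general. There is also a geometric issue you gloss over: the two legs of the trapezoid slope in \emph{opposite} directions, so a single orientation of $P_{a,b}$ cannot be parallel to both, and the region left after removing your corner tiles is not a parallelogram and cannot be filled by a uniform grid without further triangular leftovers that you have not accounted for. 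The ``row-by-row bookkeeping'' you anticipate would not close; it is precisely the three-similar-triangle decomposition above that makes the arithmetic work.
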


\begin{figure}
    \centering
    \begin{tikzpicture}[scale=1.5]
        \tkzDefPoint(0,0){A}
        \tkzDefPoint(4,0){B}
        \tkzDefPoint(5,0){G}
\begin{scope}[shift=(A)]
\tkzDefPoint(60:15){X}
\tkzDefPoint(40:15){Y}
\end{scope}
\begin{scope}[shift=(B)]
\tkzDefPoint(120:15){X1}
\tkzDefPoint(160:15){Y2}
\end{scope}
\tkzInterLL(A,Y)(B,Y2)
\tkzGetPoint{E}
\begin{scope}[shift=(E)]
\tkzDefPoint(180:15){Z1}
\tkzDefPoint(0:15){Z2}
\end{scope}
\begin{scope}[shift=(G)]
\tkzDefPoint(120:15){G2}
\end{scope}
\tkzInterLL(E,Z1)(A,X)
\tkzGetPoint{D}
\tkzInterLL(E,Z2)(B,X1)
\tkzGetPoint{C}
\tkzInterLL(E,Z2)(G,G2)
\tkzGetPoint{F}
\tkzDrawPoints(A,B,C,D,E)
\tkzLabelPoints(A,B,C,D,E)
\tkzDrawPolySeg(A,D,E,A,B,E,C,B)
\tkzLabelSegments[color=black,above=4pt](D,E){$a^2$}
\tkzLabelSegments[color=black,above=4pt](C,E){$b^2$}
\tkzLabelSegments[color=black,left=4pt](A,D){$ab$}

\tkzMarkAngle[arc=l,size=0.2](E,A,D)
\tkzMarkAngle[arc=l,size=0.2](B,E,C)
\tkzMarkAngle[arc=l,size=0.2](E,B,A)

\end{tikzpicture}
    \caption{The basic ideal trapezoid. The marked angles are equal to $\alpha$.}
    \label{fig:ideal-trapezoid-basic}
\end{figure}
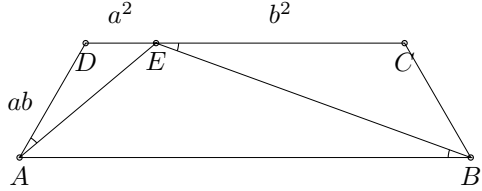

\begin{proof}
    See Figure~\ref{fig:ideal-trapezoid-basic}, specifically $ABCD$. We can split such a trapezoid into a $(a^2, ab, ac)$, a $(ba, b^2, bc)$, and a $(ca, cb, c^2)$ triangle, all of which are similar to the $(a,b,c)$ triangle by an integral factor and thus can be tiled by it.
\end{proof}

We use $\mathbb{N}_0$ to denote the set of nonnegative numbers $\{0, 1, 2, \ldots \}$. We will use the well-known result (e.g. \cite{sylvester-frobenius}):
\begin{prop}[Frobenius number of $2$ Elements]
\label{prop:frobenius-2}
If $\gcd(a,b) = 1$ and $x > ab-a-b$, then $x \in a\mathbb{N}_0 + b\mathbb{N}_0$; in other words, $x$ can be written as a nonnegative linear combination of $a$'s and $b$'s.
\end{prop}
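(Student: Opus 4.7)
The plan is to deduce this directly from Bezout's identity. Since $\gcd(a,b) = 1$, Bezout guarantees integers $u,v$ with $au + bv = 1$, and scaling gives $x = a(ux) + b(vx)$ for any integer $x$. The entire task is then to massage this representation so that both coefficients are nonnegative whenever $x > ab - a - b$.

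First I would normalize the representation by choosing the unique integer $u$ with $0 \le u \le b-1$ and $au \equiv x \pmod{b}$; such a $u$ exists and is unique because $a$ is invertible modulo $b$ (a consequence of $\gcd(a,b) = 1$). Then $v := (x - au)/b$ is automatically an integer, and $u \ge 0$ holds by construction, so the only remaining thing to check is $v \ge 0$. Using $u \le b-1$ one gets $au \le ab - a$, so
\[
v \;=\; \frac{x - au}{b} \;\ge\; \frac{x - (ab - a)}{b} \;>\; \frac{(ab - a - b) - (ab - a)}{b} \;=\; -1,
\]
where the strict middle inequality uses the hypothesis $x > ab - a - b$. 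Since $v$ is an integer, $v > -1$ forces $v \ge 0$, producing the desired representation $x = au + bv \in a\mathbb{N}_0 + b\mathbb{N}_0$.

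The hard part, if any, is just the bookkeeping around the strict inequality $x > ab - a - b$; there is no real obstacle here since the result is classical and the key idea is the clean normalization $0 \le u \le b-1$. It is worth noting that the bound is tight, as $ab - a - b$ itself is not representable (this is the defining property of the Frobenius number of two elements), but sharpness is not needed for the applications to ideal trapezoids in the sequel.
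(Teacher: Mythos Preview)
Your proof is correct and follows the standard argument via normalizing the residue of $au$ modulo $b$. The paper does not actually prove this proposition: it is stated as a well-known result with a citation to the literature (Sylvester--Frobenius), so there is no in-paper argument to compare against. Your write-up would serve perfectly well as a self-contained justification; the remark on sharpness is accurate but, as you note, unnecessary for the downstream applications.
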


\begin{lem}
    \label{lem:parallelogram-1} Let $Q$ by a parallelogram with angles $2\pi/3, \pi/3, 2\pi/3, \pi/3$ in clockwise order. Let the two side lengths be $x$ and $y$. then if $y = ab$ and $x > ab-a-b$, $Q$ can be tiled by $(a,b,c).$
\end{lem}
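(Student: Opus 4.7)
The plan is to dissect $Q$ into many small ``unit'' parallelograms of sides $a$ and $b$ (with the same $\pi/3, 2\pi/3$ angles as $Q$), each of which is tileable by two copies of the $(a,b,c)$ triangle.

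First I would establish the unit tiling: two copies of $R$ glued along the side of length $c$ (the side opposite the $2\pi/3$ angle) form a parallelogram with sides $a, b$ and angles $\pi/3, 2\pi/3$, so any such unit parallelogram is tileable by two copies of $R$. Next, because our standing integrality assumption takes $(a,b,c)$ to be pairwise coprime, we have $\gcd(a,b)=1$; combined with $x > ab - a - b$, Proposition~\ref{prop:frobenius-2} furnishes $m, n \in \mathbb{N}_0$ with $x = ma + nb$.

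Then I would slice. Orient $Q$ so that the pair of sides of length $y = ab$ lies along direction $\vec{u}$ and the pair of length $x$ along direction $\vec{v}$. A single cut parallel to $\vec{u}$ at $\vec{v}$-offset $ma$ produces sub-parallelograms $Q_1$ of dimensions $ab \times ma$ and $Q_2$ of dimensions $ab \times nb$. I would then further dissect $Q_1$ by cuts parallel to $\vec{v}$ at $\vec{u}$-offsets $b, 2b, \ldots, (a-1)b$ together with cuts parallel to $\vec{u}$ at $\vec{v}$-offsets $a, 2a, \ldots, (m-1)a$; this gives $am$ unit parallelograms with side $b$ along $\vec{u}$ and side $a$ along $\vec{v}$. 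Symmetric slicing of $Q_2$ (with the roles of $a$ and $b$ swapped) yields $bn$ further unit parallelograms. Since every cut is parallel to a side of $Q$, each sub-piece retains angles $\pi/3, 2\pi/3$, and the final tiling uses $2(am + bn)$ triangles.

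There is no significant obstacle: the construction is essentially a direct composition of Proposition~\ref{prop:frobenius-2} with the two-triangle-parallelogram observation. The only bookkeeping is to check that the dimensions of $Q_1$ and $Q_2$ are divisible in the needed directions, which is automatic from $ab = a \cdot b = b \cdot a$, and to dispose of the degenerate cases $m = 0$ or $n = 0$ by simply omitting the corresponding (empty) sub-parallelogram.
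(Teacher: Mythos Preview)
Your proof is correct and is essentially identical to the paper's: both glue two copies of $R$ along the side $c$ to form the $(a,b)$ unit parallelogram, invoke Proposition~\ref{prop:frobenius-2} to write $x = ma + nb$, and then tile $Q$ by $am$ unit parallelograms in one orientation together with $bn$ in the other. The only cosmetic difference is that the paper first packages the unit parallelograms into $(ab,a)$ and $(ab,b)$ strips before stacking, whereas you slice $Q$ directly into the $am+bn$ units.
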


\begin{figure}
    \centering
\includegraphics[scale=0.4]{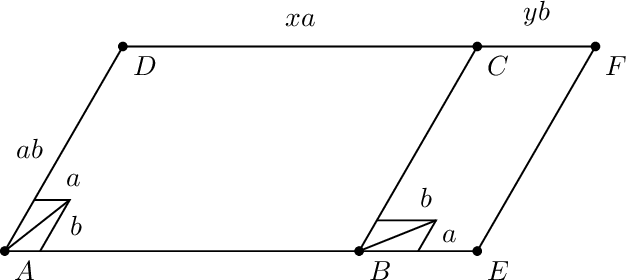}
    \caption{How to tile different parallelograms. The bigger parallelograms $ABCD$ and $BCFE$ are tiled by the same small $(a,b,a,b)$ parallelogram, which in turn tiles into two $(a,b,c)$ triangles each.}
    \label{fig:parallelograms}
\end{figure}

\begin{proof}
See Figure~\ref{fig:parallelograms} for the intuition. First, we can combine two $(a,b,c)$ triangles together to make a parallelogram $P$ with the same angles as in the assumption, with the sides in order $(a,b,a,b)$. Now, note that we are able to tile such a parallelogram with side lengths $(ab,a,ab,a)$ and also $(ab,b,ab,b)$ by two different orientations of $P$. By Proposition~\ref{prop:frobenius-2}, we are able to combine some nonnegative numbers of these two parallelograms to tile a $(ab,(ax+yb), ab, (ax+yb))$ parallelogram, where we can take $ax+yb$ to be any integer greater than $ab-a-b$. 
\end{proof}

\begin{prop}
    \label{prop:trapezoid-2} Let $ABCD$ be an ideal trapezoid.  Then if $x >  c^2-a-b$ and $(ab)|y$, $ABCD$ can be tiled by $(a,b,c)$.
\end{prop}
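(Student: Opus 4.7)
The plan is to slice the ideal trapezoid horizontally into $k := (y-x)/ab$ strips of uniform height $ab\sqrt{3}/2$, tile each strip individually by combining Lemma~\ref{lem:trapezoid-1} with Lemma~\ref{lem:parallelogram-1}, and then stack the tilings. Because the slant sides of an ideal trapezoid make angle $\pi/3$ with the base, a horizontal cut rising by $ab\sqrt{3}/2$ narrows the horizontal width by exactly $ab$, so each strip is itself an ideal trapezoid whose non-parallel sides have length exactly $ab$. Indexing strips from the bottom by $j = 0, 1, \ldots, k-1$, strip $j$ has bottom width $y_j := y - j\,ab$ and top width $y_j - ab$. For $k$ to be a positive integer I will read the hypothesis as carrying along $ab \mid (y-x)$ (equivalently $ab \mid x$, given $ab \mid y$), since the basic trapezoid of Lemma~\ref{lem:trapezoid-1} itself has top and bottom differing by exactly $ab$ so this is the natural divisibility to preserve.

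Inside strip $j$ I will place the basic trapezoid of Lemma~\ref{lem:trapezoid-1} (bottom $c^2$, top $a^2+b^2$, non-parallel sides $ab$) flush against the strip's left slant edge, and fill the remaining region on the right by a single parallelogram. Since the right slant of the basic trapezoid has the same direction and length $ab$ as the right slant of the strip, the leftover on the right is a parallelogram whose slant sides have length $ab$, whose horizontal sides have length $y_j - c^2$, and whose interior angles run $2\pi/3, \pi/3, 2\pi/3, \pi/3$ clockwise -- precisely the configuration covered by Lemma~\ref{lem:parallelogram-1}. That lemma tiles the leftover provided $y_j - c^2 > ab - a - b$, with the borderline case $y_j = c^2$ being the trivial one in which the basic trapezoid alone fills the strip.

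The main obstacle, and the reason for the stated inequality $x > c^2 - a - b$, is verifying this Frobenius-type condition at the binding strip. The worst case is the topmost strip, $j = k-1$, where $y_{k-1} = x + ab$; the required inequality $x + ab - c^2 > ab - a - b$ simplifies exactly to $x > c^2 - a - b$. Every lower strip has strictly larger $y_j$, so the condition holds a fortiori. Finally, gluing the per-strip tilings along their common horizontal interfaces recovers $ABCD$: each strip's left edge is the corresponding portion of $AD$ (because I placed the basic trapezoid flush against it), and in each strip the concatenation of the basic trapezoid's right slant with the extension parallelogram's right slant is collinear with the corresponding portion of $BC$. This produces the desired tiling of $ABCD$ by $(a,b,c)$.
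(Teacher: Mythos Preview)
Your proof is correct and follows essentially the same approach as the paper: slice the trapezoid into horizontal strips with lateral side $ab$, decompose each strip into the basic trapezoid of Lemma~\ref{lem:trapezoid-1} plus a parallelogram handled by Lemma~\ref{lem:parallelogram-1}, and verify the Frobenius bound at the topmost (narrowest) strip, where it reduces exactly to $x > c^2 - a - b$. Your explicit observation that one also needs $ab \mid (y-x)$ for the slicing to yield an integer number of strips is precisely the divisibility the paper's own argument invokes (it writes the lateral side as $|AD| = kab$), so you have not added anything foreign to the intended proof.
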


\begin{figure}
    \centering

\includegraphics[scale=0.35]{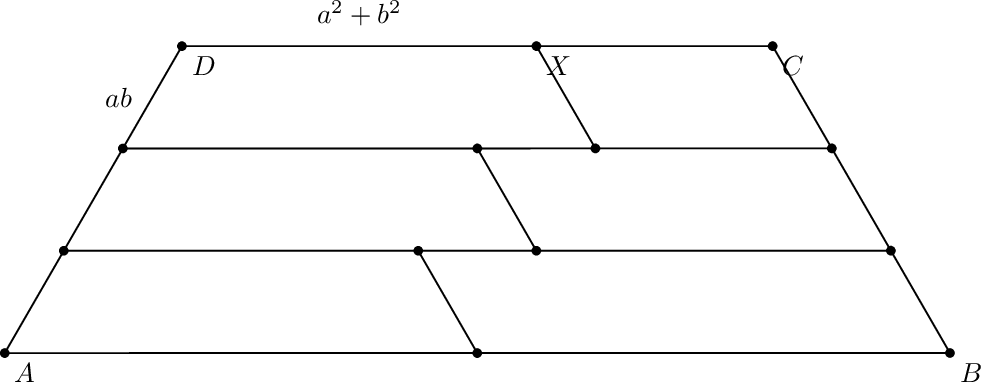}
    \caption{Tiling more complex ideal trapezoids.}
    \label{fig:ideal-trapezoid-complex}
\end{figure}

\begin{proof}
See Figure~\ref{fig:ideal-trapezoid-complex} for the intuition. First, (assuming angle $BAD$ and $CBA$ equal $\pi/3$), suppose $y = |AD| = kab$, where $k$ is an integer. Draw parallel lines that split $ABCD$ into $k$ ideal trapezoids where the lateral sides have length $ab$. We can split each of these into an ideal trapezoid with $x = a^2+b^2$ and $y=ab$, which we showed in Lemma~\ref{lem:trapezoid-1} to be tileable by $(a,b,c)$, and a parallelogram with one pair of sides having length $ab$. This divides the top ideal trapezoid (with one side $CD$) into two parts, one of length $|XD| = a^2+b^2$ and one of length $|CX|$, which we assumed to be greater than  
$$c^2-a-b - (a^2+b^2) = ab-a-b,$$
meaning that the parallelogram with side $CX$ is also tileable by $(a,b,c)$ according to Lemma~\ref{lem:parallelogram-1}. The other $(k-1)$ ideal trapezoids that $ABCD$ split into have longer side lengths, so Lemma~\ref{lem:parallelogram-1} also applies to them. Thus, $ABCD$ is tileable into $(a,b,c).$ 
\end{proof}

\section{Equilateral Triangles}
\label{sec:equilateral}

\subsection{An Infinite Family of Tilings}

If $(X,X,X)$ tiles into $(a,b,c)$, we say that $X$ is \emph{equiconstructible} by $(a,b,c)$. By comparing areas of $T$ and $R$, we see that the number of tiles is $$N := N(X,a,b,c) = \frac{\frac{1}{2}X^2 \sin{(\pi/3)}}{\frac{1}{2}ab \sin{(2\pi/3)}} = \frac{X^2}{ab}.$$



\begin{figure}
    \centering

\includegraphics[scale=0.5]{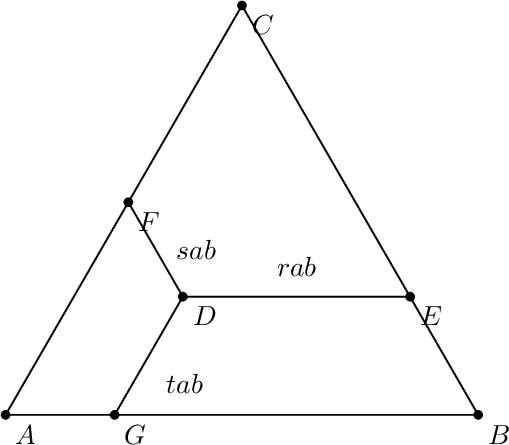}
    \caption{Tiling the equilateral triangle into $3$ ideal trapezoids.}
    \label{fig:equilateral-construction-1}
\end{figure}

\begin{thm}
    \label{thm:equilateral-construction-1} Let $M = 3\lceil \frac{c^2 - a - b}{ab} \rceil$. Then for all integers $m \geq M$, $mab$ is equiconstructible by $(a,b,c)$. As a consequence, there is a $m^2 ab$ tiling for all such $m$.
\end{thm}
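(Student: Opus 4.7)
Decompose the equilateral triangle $T$ of side $X = mab$ into three ideal trapezoids, and tile each using Proposition~\ref{prop:trapezoid-2}. The tile count $m^2 ab$ then follows from areas: $T$ has area $(mab)^2\sqrt{3}/4$ and each $(a,b,c)$ tile has area $ab\sqrt{3}/4$.

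\textbf{Decomposition.} Place $T$ in standard position with $A=(X/2,\,X\sqrt{3}/2)$, $B=(0,0)$, $C=(X,0)$. Pick an interior point $P=(p,q)$ and draw three segments from $P$: one parallel to $AB$ ending at $Q_B \in CA$, one parallel to $BC$ ending at $Q_C \in AB$, and one parallel to $CA$ ending at $Q_A \in BC$. These segments cut $T$ into three quadrilaterals $AQ_CPQ_B$, $BQ_APQ_C$, $CQ_BPQ_A$. A brief angle chase shows each is an ideal trapezoid: the $T$-vertex automatically contributes a $\pi/3$ long-side corner; the parallelism forces the adjacent $Q$-point to also be a $\pi/3$ long-side corner; and the remaining two corners (at $P$ and at the $Q$-point on the opposite side) are $2\pi/3$. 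The three $2\pi/3$ corners at $P$ consume the entire $2\pi$ angle around $P$.

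\textbf{Parameter choice and finishing.} A direct coordinate computation gives, with $u := p - q/\sqrt{3}$ and $v := 2q/\sqrt{3}$, the $(y,x)$ values $(X-v,\,X-u-v)$ near $A$, $(u+v,\,u)$ near $B$, and $(X-u,\,v)$ near $C$. Let $K := \lceil (c^2-a-b)/ab \rceil$ and set $u = v = Kab$; then all three $y$ values are positive multiples of $ab$ (since $X = mab$), and each $x$ value equals either $Kab$ or $(m-2K)ab$. When $m \geq M = 3K$ we have $m-2K \geq K$, so every $x$ strictly exceeds $c^2 - a - b$, and Proposition~\ref{prop:trapezoid-2} tiles each trapezoid.

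\textbf{Main obstacle.} The core geometric work is the angle chase certifying that the three quadrilaterals are ideal trapezoids, together with the check that $P = (3Kab/2,\,Kab\sqrt{3}/2)$ lies strictly inside $T$ (which reduces to the inequality $m > 2K$, implied by $m \geq M$). A small edge case arises when $(c^2-a-b)/ab$ happens to be an integer: then $Kab = c^2-a-b$ is not strictly greater, and one must take $u = v = (K+1)ab$ instead; this case does not affect the large-$m$ behavior captured by the theorem.
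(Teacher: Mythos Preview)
Your approach is essentially the paper's: the same pinwheel decomposition of the equilateral triangle into three ideal trapezoids meeting at an interior point (this is exactly the paper's Figure~\ref{fig:equilateral-construction-1}), followed by Proposition~\ref{prop:trapezoid-2} on each piece, with your $(u,v)$ playing the role of the paper's parameters $r\cdot ab,\,s\cdot ab$. One remark on your edge case: it is in fact vacuous, because $ab \mid (c^2-a-b)=a^2+ab+b^2-a-b$ together with $\gcd(a,b)=1$ would force $a\mid(b-1)$ and $b\mid(a-1)$, which is impossible for $a,b\geq 2$ (and $a=1$ or $b=1$ yields no integral $c$); hence $Kab>c^2-a-b$ holds strictly and the bound $m\geq M$ is attained exactly as stated.
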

\begin{proof}
Consider $3$ integers $r,s,t$ that are all at least $M$. Consider the ideal trapezoid with $x = rab$ and $y = sab$. Because $r$ is an integer and $s \geq M$, Proposition~\ref{prop:trapezoid-2} applies and this trapezoid can be tiled by $(a,b,c)$. By also doing this with $(x = sab, y = tab)$ and $(x = tab, y=rab)$, we obtain $3$ ideal trapezoids that can all be tiled by $(a,b,c)$. 

Now, an equilateral triangle with sides $(r+s+t)ab$ can be tiled into these $3$ trapezoids as in Figure~\ref{fig:equilateral-construction-1}. This means $(r+s+t)ab$ is equiconstructible, giving a tiling with $(r+s+t)^2ab$ tiles. By construction, $(r+s+t)$ can be taken to be any integer at least $M$, which finishes the proof.
\end{proof}

The consequence of Theorem~\ref{thm:equilateral-construction-1} is ``sharp'' for \emph{square-free} side lengths:
\begin{lem}
\label{lem:squarefree}
If $a$, $b$ are square-free, then if $X$ is equiconstructible, then we must have $X = mab$ for some integer $m$.
\end{lem}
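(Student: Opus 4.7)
The plan is to combine the area-counting identity displayed immediately before Theorem~\ref{thm:equilateral-construction-1} with one elementary divisibility fact. Since $X$ is equiconstructible, the equilateral triangle of side $X$ tiles into
$$N = N(X,a,b,c) = \frac{X^2}{ab}$$
copies of $R$, and $N$ is a positive integer because it counts tiles. Hence $ab \mid X^2$.

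To upgrade $ab \mid X^2$ to $ab \mid X$, I would argue that $ab$ is square-free. Two ingredients go into this: the hypothesis that $a$ and $b$ are each square-free, and the coprimality $\gcd(a,b)=1$. The latter is the paper's standing normalization that $(a,b,c)$ is pairwise prime, and in any case is forced by the law of cosines $c^2 = a^2 + ab + b^2$: any prime $p$ dividing both $a$ and $b$ satisfies $p^2 \mid c^2$, hence $p \mid c$, which contradicts the pairwise coprimality (or allows one to rescale the tile to something smaller). Once $\gcd(a,b)=1$ and both $a,b$ are square-free, their product $ab$ is square-free.

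Finally I would invoke the standard fact that for a square-free positive integer $n$, the implication $n \mid X^2 \Rightarrow n \mid X$ holds: writing $n = p_1 \cdots p_k$ with distinct primes, each $p_i \mid X^2$ forces $p_i \mid X$, and distinct primes multiply. Applying this with $n = ab$ gives $ab \mid X$, so $X = mab$ for some integer $m$, which is positive because $X$ is.

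There is no real obstacle here; the content is a one-line area count plus one line of elementary number theory. The only point worth being explicit about is that we really are in the regime $\gcd(a,b)=1$, since without this hypothesis $ab$ could fail to be square-free even when $a$ and $b$ are, and the divisibility step would break.
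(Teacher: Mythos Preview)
Your proof is correct and follows essentially the same route as the paper: the area identity $N = X^2/(ab)$ gives $ab \mid X^2$, and then square-freeness of $ab$ (from $\gcd(a,b)=1$ together with $a,b$ square-free) upgrades this to $ab \mid X$. The paper phrases the upgrade as ``$(ab)\mid N$ and thus $(ab)^2\mid X^2$'' rather than invoking the square-free implication $n\mid X^2 \Rightarrow n\mid X$ directly, but the content is identical.
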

\begin{proof}
    We need $X^2 = abN$, where $N$ is the number of tiles used. Since $\gcd(a,b) = 1$, $a$ and $b$ being square free implies $(ab)|N$ and thus $(ab)^2|X^2$. 
\end{proof}
\begin{conj}
\label{conj:equilateral}
All equiconstructible $X$ are divisible by $ab$. As the smallest interesting case, we conjecture that all equiconstructible $X$ for $(5,16,19)$ are divisible by $16$.
\end{conj}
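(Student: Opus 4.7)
The plan is to combine an elementary arithmetic step with a structural invariant. First, the identity $X^2 = abN$ together with $\gcd(a,b) = 1$ already forces $X$ to be divisible by the squarefree radical of $ab$, exactly as in the proof of Lemma~\ref{lem:squarefree}; for $(5,16,19)$ this gives only $20 \mid X$, so the real content of the conjecture is the extra factor of $4$ arising from the square part of $b = 16$.

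The main step I would try is a vertex-and-edge counting argument that exploits the incommensurability of $\alpha$ with $\pi$. Because $\gamma = 2\pi/3$ forces $\alpha + \beta = \pi/3$, if $p$, $q$, $r$ count the $\alpha$-, $\beta$-, and $\gamma$-angles of tiles meeting at a point, irrationality of $\alpha/\pi$ makes the angle-sum equation equivalent to $p = q$ together with $q + 2r \in \{6, 3, 1\}$ at an interior vertex, a non-corner boundary vertex, or a corner of $T$ respectively. The admissible vertex profiles $(p,q,r)$ are therefore $(6,6,0)$, $(4,4,1)$, $(2,2,2)$, $(0,0,3)$ in the interior; $(3,3,0)$, $(1,1,1)$ on a side; and the unique profile $(1,1,0)$ at each corner. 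These yield global linear relations among the counts of tiles and of each vertex type.

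To extract divisibility, I would upgrade the counting to an invariant in $\mathbb{Z}[\omega]$ with $\omega = e^{i\pi/3}$, treating each tile edge as a lattice vector of length $a$, $b$, or $c$ in one of six rotation classes. Interior edges cancel in pairs, so the total signed edge-sum in each rotation class is determined entirely by the boundary of $T$; reading these off for the three sides of length $X$ and reducing modulo each prime power dividing $ab$ should give the desired congruences, ideally yielding $16 \mid X$ in the $(5,16,19)$ case and $ab \mid X$ in general.

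The main obstacle, and presumably the reason this is only conjectured, is that the vertex profiles leave enough combinatorial flexibility that a naive boundary accounting appears to recover only the radical divisibility from Lemma~\ref{lem:squarefree}. A successful argument will likely need a finer invariant, one that pairs $a$- and $b$-edges meeting across interior segments or tracks tile orientations more carefully, perhaps along the lines of Laczkovich's elliptic-curve encoding in \cite{laczkovich2020rational}. A reasonable concrete first step is a computer-assisted enumeration of local configurations near the boundary of purported $(5,16,19)$-tilings for small $N$, both to confirm the conjecture experimentally and to suggest what shape the right invariant should take.
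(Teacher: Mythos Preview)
The statement you are addressing is Conjecture~\ref{conj:equilateral}, which the paper explicitly leaves open; there is no proof in the paper to compare your attempt against. The paper only proves the squarefree special case (Lemma~\ref{lem:squarefree}), observes that $(5,16,19)$ is the smallest instance not covered by that lemma, and records the full statement as a conjecture whose confirmation would settle the motivating problem.

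Your write-up is therefore not a proof but a research plan, and you are candid about this. The preliminary steps are sound: the reduction via $X^2 = abN$ to the radical divisibility $20 \mid X$ is exactly Lemma~\ref{lem:squarefree}, and your enumeration of admissible vertex profiles $(p,q,r)$ from the incommensurability constraint $p=q$, $q+2r \in \{6,3,1\}$ is correct. The proposed $\mathbb{Z}[\omega]$ edge-vector invariant is a natural thing to try. However, as you yourself note, the boundary-cancellation argument as sketched does not obviously see the square part of $ab$, and you do not supply the ``finer invariant'' that would close this gap. So the proposal identifies the right obstacle but does not overcome it; it remains a plausible outline for future work rather than a proof, which is consistent with the paper's own stance that the statement is open.
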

Confirming this conjecture would resolve our motivating problem:
\begin{conj}
    \label{conj:formal-equilaeral}
    For all non-reptile and incommensurable tilings of $T$ into $R = (a,b,c)$ with an angle equal to $2\pi/3$, the possible $N$ is the set 
    $$\{m^2 ab | m \geq M\} $$
    where $M$ is defined as in Theorem~\ref{thm:equilateral-construction-1}.
\end{conj}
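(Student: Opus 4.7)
The forward inclusion is Theorem~\ref{thm:equilateral-construction-1}, so the task reduces to the reverse. Given any tiling of an equilateral $T$ of side $X$ by $N$ copies of $R=(a,b,c)$, area comparison forces $X^2 = abN$, so the reverse inclusion decomposes cleanly into two subclaims: (a) $ab \mid X$, so that writing $X = mab$ yields $N = m^2 ab$; and (b) this integer $m$ must satisfy $m \ge M$. I will handle these separately, with the first being essentially Conjecture~\ref{conj:equilateral} and the second a local geometric argument.

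For subclaim (a), my plan is to extend the area-based proof of Lemma~\ref{lem:squarefree} with a combinatorial analysis of the tiling. Incommensurability means $\{1, \alpha, \beta\}$ is linearly independent over $\mathbb{Q}$ modulo rational multiples of $\pi$, so any angle-sum relation $k\alpha + l\beta + n\gamma = C\pi$ occurring at a vertex (with $C=2$ interior, $C=1$ on an edge, or $C=1/3$ at a corner of $T$) gives rigid integer constraints on $(k,l,n)$ beyond the trivial $\alpha+\beta=\pi/3$. I would enumerate all admissible vertex types and write down the linear system counting them, augmented by the edge-pairing equations that say every internal edge of length $a$, $b$, or $c$ is shared by two tiles. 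From this system I would aim to extract, for each prime $p \mid ab$, a valuation inequality $v_p(X) \ge v_p(ab)$, in the spirit of Beeson's parity and coloring arguments in \cite{beeson-seven,beeson-isosceles}.

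Subclaim (b) should be more accessible. Each of the three corners of $T$ spans an angle of $\pi/3$, which is strictly greater than either $\alpha$ or $\beta$ but smaller than $\gamma$, strongly constraining which tile vertices can land there. A case analysis of how tiles meet at a corner, together with the constraint that their boundary edges pack into a straight segment of length $mab$ along each side of $T$, should force the first row of tiles at each corner into essentially the shape used in Figure~\ref{fig:equilateral-construction-1}. The Frobenius hypothesis underlying Proposition~\ref{prop:trapezoid-2} would then reappear as a necessary condition on the corner region, yielding precisely the lower bound $m \ge M$.

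The principal obstacle is subclaim (a) in the non-squarefree case: once $a$ or $b$ has a squared prime factor, the area argument of Lemma~\ref{lem:squarefree} breaks, because prime powers dividing $N$ can compensate and no longer force divisibility into $X$. The stated open case $(a,b,c) = (5, 16, 19)$ already exhibits this, and I suspect settling it will require either a computer-assisted enumeration of small candidate tilings, a genuinely new tiling invariant sensitive to the non-squarefree part of $ab$, or a reduction to an arithmetic question on an elliptic curve in the spirit of \cite{laczkovich2020rational}. By contrast, once (a) is in hand I expect (b) to follow in a couple of pages of corner analysis.
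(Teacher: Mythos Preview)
The statement you are attempting to prove is labeled a \emph{Conjecture} in the paper, and the paper provides no proof of it. There is therefore nothing to compare your proposal against: the reverse inclusion you attack is exactly what the authors leave open. Your decomposition into (a) $ab\mid X$ and (b) $m\ge M$ is correct, and you rightly identify (a) as Conjecture~\ref{conj:equilateral}, which the paper explicitly states as unsolved (with $(5,16,19)$ singled out as the smallest open instance).

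Your proposal is thus a research plan rather than a proof, and you acknowledge as much in your final paragraph. Two remarks on the plan itself. First, the vertex-type enumeration and edge-pairing system you sketch for (a) is the same flavor of argument Beeson already deploys in the papers you cite, and it has not so far yielded $ab\mid X$ in the non-squarefree case; there is no indication the linear system by itself carries enough information to force $v_p(X)\ge v_p(ab)$ when $p^2\mid ab$. Second, your optimism about (b) is likely misplaced: proving $m\ge M$ means ruling out \emph{every} tiling for the intermediate values (e.g.\ $m=7,8$ for $(3,5,7)$, which the paper explicitly leaves open), and there is no a priori reason a minimal tiling must begin with a corner configuration resembling the ideal-trapezoid decomposition of Figure~\ref{fig:equilateral-construction-1}. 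Corner analysis constrains only a neighborhood of each vertex of $T$; lifting that to a global lower bound that coincides exactly with the Frobenius-derived constant $M$ would amount to showing the paper's construction is essentially forced, which is far stronger than anything the paper claims to know.
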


As an example, consider $(a,b,c) = (3,5,7)$. Then Theorem~\ref{thm:equilateral-construction-1} shows\footnote{The $m=9$ case corresponds to Herdt's known construction with side length $135$.} that $15m$ is equiconstructible for $m \geq 9$; furthermore, these are the only $X \geq 135$ that occur. Separate work by Beeson shows that $X < 105$ is known to not exist \cite{beeson-equilateral}, so this reduces understanding all equiconstructible $X$ for $(3,5,7)$ to only the cases $105$ and $120$, which we conjecture to not exist. 

\subsection{Tiles with a $\pi/3$ Angle}

We can obtain a similar result for another row in Figure~\ref{fig:beeson-incommensurable}, which is the case of incommensurable-angles where one of the angles is $\pi/3$ instead of $2\pi/3$. For this subsection only, we let $(a,b,c)$ be an integral-sided (again, \cite{laczkovich2012tilings} shows that this assumption actually loses nothing) tile with corresponding angles $(\alpha, \beta, \gamma = \pi/3).$ By the law of cosines, $c^2 = a^2 + b^2-ab$. 

\begin{lem}
    \label{lem:trapezoid-simple-alt} Let $ABCD$ be an ideal trapezoid.  Then if $x = c^2$ and $y = ab$, $ABCD$ can be tiled by $(a,b,c)$, where $c$ faces a $\pi/3$ angle.
\end{lem}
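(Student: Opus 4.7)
The plan is to mirror the proof of Lemma~\ref{lem:trapezoid-1}, adjusting only the interior decomposition for the $\pi/3$ geometry of the tile. First I would fix coordinates so the trapezoid is completely explicit: place $A=(0,0)$, $B=(a^2+b^2,0)$, $D=(ab/2,\tfrac{\sqrt{3}}{2}ab)$, and $C=(a^2+b^2-ab/2,\tfrac{\sqrt{3}}{2}ab)$. Using $c^2 = a^2+b^2-ab$, a direct check gives $|AD|=|BC|=ab$, $|CD|=c^2$, and base angles $\pi/3$ at $A$ and $B$, so this realizes an ideal trapezoid matching the hypothesis.

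Next, I would introduce the interior vertex $E=(a^2,0)$ on segment $AB$, splitting $ABCD$ into three sub-triangles $ADE$, $CDE$, and $BCE$. The bulk of the argument is then verifying that each of these is similar to $(a,b,c)$ by an integer factor. For $ADE$, the sides $|AE|=a^2$ and $|AD|=ab$ meet at $A$ with included angle $\pi/3$, so by the law of cosines $|DE|=ac$, making $ADE$ similar to $(a,b,c)$ at scale $a$. A mirror computation on the right gives that $BCE$ is similar to $(a,b,c)$ at scale $b$. The remaining triangle $CDE$ then inherits side lengths $(c^2,ac,bc)$ and is similar to $(a,b,c)$ at scale $c$, with its $\pi/3$ angle sitting at $E$ opposite $CD$, as the Lemma asserts.

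Finally, since any triangle with sides $(ka,kb,kc)$ reptiles into $k^2$ congruent copies of $(a,b,c)$ via the standard subdivision of each side into $k$ equal parts, the three sub-triangles collectively tile $ABCD$ with a total of $a^2+b^2+c^2$ copies of $(a,b,c)$, each placed with side $c$ opposite a $\pi/3$ angle. The only real obstacle is guessing the correct interior point $E$; once $E=(a^2,0)$ is in hand, everything reduces to routine law-of-cosines verifications exactly parallel to Lemma~\ref{lem:trapezoid-1}.
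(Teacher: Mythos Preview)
Your proposal is correct and is essentially the same decomposition the paper uses: place $E$ on $AB$ with $|AE|=a^2$, $|EB|=b^2$, and split $ABCD$ into the three triangles $ADE$, $DCE$, $BCE$ similar to $(a,b,c)$ at scales $a$, $c$, $b$. The paper's proof simply points to Figure~\ref{fig:ideal-trapezoid-basic-alt} and notes that $|CD|=c^2$ and $|AB|=a^2+b^2$, whereas you spell out the coordinates and law-of-cosines checks explicitly; the content is identical.
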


\begin{figure}
    \centering
\begin{tikzpicture}[scale=1.5]
        \tkzDefPoint(0,0){A}
        \tkzDefPoint(60:2){D}
        \tkzDefPoint(15,0){X}
\begin{scope}[shift=(D)]
\tkzDefPoint(290:15){D1}
\tkzDefPoint(0:15){D2}
\end{scope}
\tkzInterLL(A,X)(D,D1)
\tkzGetPoint{E}
\begin{scope}[shift=(E)]
\tkzDefPoint(50:15){E1}
\end{scope}
\tkzInterLL(E,E1)(D,D2)
\tkzGetPoint{C}
\begin{scope}[shift=(C)]
\tkzDefPoint(300:2){B}
\end{scope}
\tkzDrawPoints(A,B,C,D,E)
\tkzLabelPoints(A,B,C,D,E)
\tkzDrawPolySeg(D,A,E,D,C,E,B,C)
\tkzLabelSegments[color=black,above=4pt](A,E){$a^2$}
\tkzLabelSegments[color=black,above=4pt](B,E){$b^2$}
\tkzLabelSegments[color=black,left=4pt](A,D){$ab$}

\tkzMarkAngle[arc=l,size=0.2](A,D,E)
\tkzMarkAngle[arc=l,size=0.2](D,C,E)
\tkzMarkAngle[arc=l,size=0.2](B,E,C)

\end{tikzpicture}
    \caption{The basic ideal trapezoid, now tiled by $3$ similar triangles with angles $(\alpha, \beta, \pi/3)$. The marked angles are equal to $\alpha$.}
    \label{fig:ideal-trapezoid-basic-alt}
\end{figure}
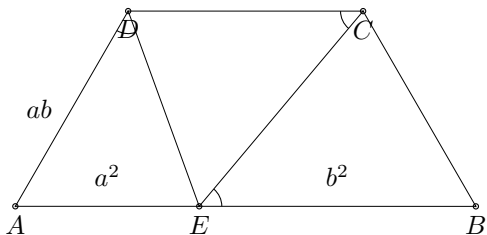

\begin{proof}
    See Figure~\ref{fig:ideal-trapezoid-basic-alt}. The idea is similar to that of Lemma~\ref{lem:trapezoid-1}, except that $|CD|$ is now $c^2$ and $|AB|$ is $a^2 + b^2$.
\end{proof}

\begin{thm}
    \label{thm:equilateral-construction-2} Let $M = 3\lceil \frac{a^2+b^2 - a - b}{ab} \rceil$. Then for all integers $m \geq M$, $mab$ is equiconstructible by $(a,b,c)$, where $c$ faces a $\pi/3$ angle. As a consequence, there is a $m^2 ab$ tiling for all such $m$.
\end{thm}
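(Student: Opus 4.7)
The plan is to follow the proof of Theorem~\ref{thm:equilateral-construction-1} essentially line-for-line, replacing Lemma~\ref{lem:trapezoid-1} with its $\pi/3$ analogue Lemma~\ref{lem:trapezoid-simple-alt} and tracking the single arithmetic change caused by the law of cosines switching from $c^2 = a^2+b^2+ab$ to $c^2 = a^2+b^2-ab$.

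The substantive step is to establish a $\pi/3$-analogue of Proposition~\ref{prop:trapezoid-2}: if $ABCD$ is an ideal trapezoid with $(ab)\mid y$ and $x > a^2+b^2-a-b$, then $ABCD$ tiles into $(a,b,c)$. The argument mimics Proposition~\ref{prop:trapezoid-2}: write $y = kab$, slice $ABCD$ by $k-1$ parallel lines into $k$ ideal sub-trapezoids each of lateral length $ab$, and split each sub-trapezoid into a basic ideal trapezoid from Lemma~\ref{lem:trapezoid-simple-alt} (which contributes short side $c^2 = a^2+b^2-ab$) plus a residual parallelogram. For Lemma~\ref{lem:parallelogram-1} to tile the residual parallelogram of the topmost sub-trapezoid one needs $x - c^2 > ab - a - b$, which rearranges exactly to $x > a^2+b^2-a-b$; the lower $k-1$ sub-trapezoids have strictly wider short sides, so the bound propagates automatically. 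A small point to verify along the way is that Lemma~\ref{lem:parallelogram-1} itself transfers to the $\pi/3$ setting: gluing two copies of $(a,b,c)$ along their common side $c$ always produces an $(a,b,a,b)$ parallelogram with angles $\gamma$ and $\pi - \gamma$, and $\{\gamma, \pi-\gamma\} = \{\pi/3, 2\pi/3\}$ whether $\gamma = 2\pi/3$ or $\pi/3$, so the parallelogram lemma and its Frobenius-based proof apply unchanged.

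With the new trapezoid proposition in hand, the rest is a verbatim copy of Theorem~\ref{thm:equilateral-construction-1}. Pick three integers $r, s, t$ each at least $M/3 = \lceil (a^2+b^2-a-b)/(ab)\rceil$, form three ideal trapezoids with $(x,y)$-pairs $(rab, sab)$, $(sab, tab)$, $(tab, rab)$ (each tileable by the new proposition), and assemble them into an equilateral triangle of side $(r+s+t)ab$ via the configuration of Figure~\ref{fig:equilateral-construction-1}. Since $r+s+t$ ranges over all integers $\geq M$, we obtain equiconstructibility of $mab$ for every $m \geq M$; the $m^2 ab$ tile count follows from the area computation $N = X^2/(ab)$ as in the $2\pi/3$ case. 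The only place the argument could go wrong is the parallelogram-lemma sanity check, which I would dispatch first; otherwise the proof is mechanical given Lemma~\ref{lem:trapezoid-simple-alt}.
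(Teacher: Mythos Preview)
Your proposal is correct and follows essentially the same approach as the paper. The paper's proof is slightly terser --- it establishes tileability only for the single-layer ideal trapezoid (lateral side $ab$) by combining Lemma~\ref{lem:trapezoid-simple-alt} with the parallelogram lemma, then defers the full stacking and the three-trapezoid assembly to ``as in Theorem~\ref{thm:equilateral-construction-1}'' --- whereas you spell out the full $\pi/3$-analogue of Proposition~\ref{prop:trapezoid-2}; but the content and the key observation (that the only arithmetic change is $c^2 + ab - a - b = a^2 + b^2 - a - b$ replacing $c^2 - a - b$) are identical.
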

\begin{proof}
As in  Theorem~\ref{thm:equilateral-construction-1}, we combine two facts:
\begin{enumerate}
    \item By Lemma~\ref{lem:trapezoid-simple-alt}, the ideal trapezoid with $x=ab$ and $y=c^2$ is tileable by $(a,b,c).$ 
    \item The reasoning of Lemma~\ref{lem:parallelogram-1} still holds: we should be able to tile any parallelogram with angles $(\pi/3, 2\pi/3, \pi/3, 2\pi/3)$ with sides $(ab,k,ab,k)$ where $k > ab-a-b$, via parallelograms with sides $(a,b,a,b)$.
\end{enumerate} 
As a consequence, we can  tile an ideal trapezoid with $x=ab$ and 
$$y > c^2 + ab-a-b = (a^2 + b^2) - a - b$$
by combining a smaller ideal trapezoid and a parallelogram.

Therefore, suppose we have $3$ integers $r, s, t$ that are all at least $\lceil \frac{a^2+b^2 - a - b}{ab} \rceil$. As in Theorem~\ref{thm:equilateral-construction-1}, the ideal trapezoid with $x=rab$ and $y=sab$ is tileable by $(a,b,c)$, and we can combine $3$ of these to give a tiling of an equilateral triangle with side $(r+s+t)ab$ and $(r+s+t)^2ab$ tiles, as desired.
\end{proof}

Consider $R = (5,8,7)$, which has the $7$ facing a $\pi/3$ angle. Since $1< (5^2 + 8^2 - 8 - 5)/40 < 2$, our construction tiles an equilateral triangle of side $3*2*40 = 240$ with $1440$ tiles. This matches a construction by Herdt (private communication to Beeson), and we conjecture that it is the smallest such $N$ for such $(T,R)$.

\begin{conj}
Conjectures~\ref{conj:equilateral} and \ref{conj:formal-equilaeral} also hold in this setting.
\end{conj}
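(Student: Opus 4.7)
The plan is to transfer the techniques of the $\gamma = 2\pi/3$ case to this $\gamma = \pi/3$ setting. The key identity $N = X^2/(ab)$ used throughout still holds here, since the area of $R$ is $\tfrac{1}{2}ab\sin\gamma = \tfrac{\sqrt{3}}{4}ab$ in both cases ($\sin(\pi/3) = \sin(2\pi/3)$), while $T$ remains equilateral. So the statements of the two conjectures make sense verbatim in this setting, and the construction side is already supplied by Theorem~\ref{thm:equilateral-construction-2}; what needs proving is the converse.

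The square-free half comes essentially for free. The proof of Lemma~\ref{lem:squarefree} uses only the equation $X^2 = abN$ together with $\gcd(a,b)=1$; it never mentions $\gamma$. Thus, for square-free coprime $a$ and $b$, we immediately obtain $(ab) \mid X$, and combined with Theorem~\ref{thm:equilateral-construction-2} this settles the analogs of both conjectures whenever the side lengths are square-free.

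For the remaining (non-square-free) cases, I would mimic whatever strategy ultimately resolves Conjecture~\ref{conj:equilateral} itself. A natural approach is to place $T$ inside the Eisenstein lattice $\mathbb{Z}[\omega]$ with $\omega = e^{i\pi/3}$, then use the modified law of cosines $c^2 = a^2 + b^2 - ab$ to show that every tile vertex has coordinates whose denominators are controlled by a specific divisor of $ab$, and finally push these local constraints forward to a divisibility statement on $X$ itself. A concrete intermediate step would be to check by computer search the smallest non-square-free example in this setting, playing the role that $(5,16,19)$ plays for the $2\pi/3$ case.

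The main obstacle is that Conjecture~\ref{conj:equilateral} is itself open: the $\pi/3$ analog cannot be easier than the $2\pi/3$ version that the paper singles out as its headline problem. In particular, ruling out interior configurations where many tile edges meet in incidentally compatible ways seems to require tools beyond sine-ratio bookkeeping, probably an adaptation of Beeson's prime-exclusion techniques (\cite{beeson-equilateral}, \cite{beeson-isosceles}) to the modified relation $c^2 = a^2 + b^2 - ab$. I expect the two conjectures to stand or fall together, so any technique that settles the $2\pi/3$ case should, with at most minor modification, settle the $\pi/3$ case as well.
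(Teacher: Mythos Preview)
The statement you were asked to prove is a \emph{conjecture} in the paper, not a theorem: the paper offers no proof and does not claim one. So there is nothing to compare your proposal against on the proof side.

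Your write-up is therefore appropriate in spirit---you correctly observe that the counting identity $N = X^2/(ab)$ and the square-free argument of Lemma~\ref{lem:squarefree} carry over verbatim to the $\gamma = \pi/3$ setting, and you correctly flag that the general (non-square-free) case is at least as hard as the open Conjecture~\ref{conj:equilateral}. That is exactly the state of knowledge the paper records. The one thing to be careful about is framing: what you have written is not a proof proposal but a discussion of why the conjecture is plausible and what partial evidence exists. If the assignment genuinely asked for a proof, the honest answer is that none is known, and your final paragraph already says as much.
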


\section{The $(2\alpha, 2\beta, \alpha+\beta)$ Triangle}

The $(2\alpha, 2\beta, \alpha+\beta)$ triangle is another incommensurable-angles case of interest. By e.g. law of sines, the sides are in ratio $(a(b+2a), b(a+2b), c^2).$ We will approach this triangle in two ways.

\begin{figure}
    \centering
\includegraphics[scale=0.7]{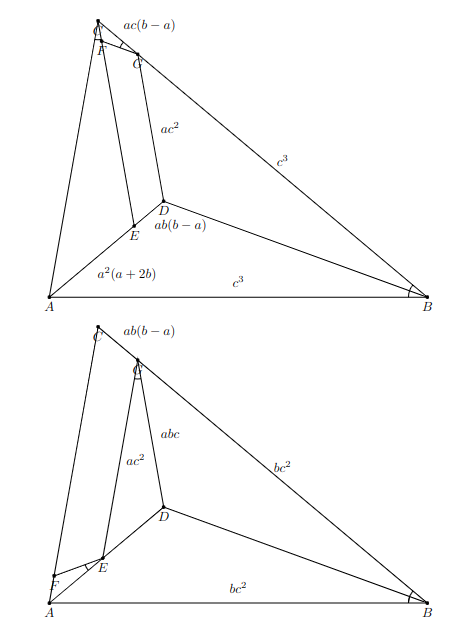}
    \caption{Triangles with angles $2\beta, 2\alpha, \alpha+\beta$. The marked angles are $\alpha$. The top figure has lengths $c/b$ times that of the bottom.}
    \label{fig:arithmetic-construction}
\end{figure}

\begin{prop}
\label{prop:arithmetic} For any integer $m \geq 1$, the following two tilings of triangles with angles $(2\alpha, 2\beta, \alpha+\beta)$ are possible:
\begin{enumerate}
    \item We can tile the triangle with sides $((a+2b)mac, (b+2a)mbc, c^3m)$ into $(b+2a)(a+2b)m^2c^2$ copies of $(a,b,c)$.
    \item We can tile the triangle with sides $((a+2b)mab, (b+2a)mb^2, bc^2m)$ into $(b+2a)(a+2b)m^2b^2$ copies of $(a,b,c)$.
\end{enumerate}
\end{prop}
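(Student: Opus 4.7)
Both triangles in the proposition are scalings of the ``unit'' $(2\alpha, 2\beta, \pi/3)$ triangle with sides $(a(a+2b),\, b(b+2a),\, c^{2})$: part (1) is this unit scaled by $mc$, and part (2) by $mb$. An area check immediately confirms that the claimed tile counts equal the target area divided by the tile area $ab\sqrt{3}/4$. This also explains the $c/b$ scale relation of Figure \ref{fig:arithmetic-construction}: the two constructions share one combinatorial decomposition at different overall scales.

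My plan for part (2), assuming WLOG $a\le b$ (the statement is symmetric in $a,b$), is to place the target with its $\pi/3$ vertex $C$ at the origin and inscribe at $C$ an equilateral triangle of side $ab(a+2b)m$, which equals the shorter of the two sides of the target meeting at $C$. This equilateral shares an entire edge with the target's side $CB$, and its side is the integer multiple $m(a+2b)\cdot ab$ of $ab$, so Theorem \ref{thm:equilateral-construction-1} tiles it into $m^{2}(a+2b)^{2}ab$ copies of $(a,b,c)$ whenever $m(a+2b)\ge M$ (the finitely many small-$m$ exceptions are handled directly). The residual region is a triangle with a $2\pi/3$ interior angle and sides $(bm(b^{2}-a^{2}),\, ab(a+2b)m,\, bc^{2}m)$. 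I would decompose this residual by placing a single $(a,b,c)$ tile with its $\gamma$-angle at the $2\pi/3$ apex, and then filling the remaining quadrilateral using ideal trapezoids (Proposition \ref{prop:trapezoid-2}) and parallelograms (Lemma \ref{lem:parallelogram-1}) whose dimensions, by the choice of the $mb$ scaling, are arranged to satisfy the required size and divisibility hypotheses. An area bookkeeping then gives $bm^{2}(b-a)(b+a)(a+2b)$ additional tiles, for a total of $m^{2}b^{2}(a+2b)(b+2a)$, as claimed. Part (1) follows by the identical combinatorial pattern at overall scale $mc$ rather than $mb$, with each analogous sub-region scaled by $c/b$.

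The primary obstacle is verifying that, in the residual-triangle decomposition, every sub-parallelogram has one pair of sides of length exactly $ab$ with the other side exceeding $ab-a-b$ (as required by Lemma \ref{lem:parallelogram-1}), and every sub-ideal-trapezoid has slanted side a multiple of $ab$ and shorter parallel side exceeding $c^{2}-a-b$ (as required by Proposition \ref{prop:trapezoid-2}). The scalings $mb$ and $mc$ are engineered precisely so that these divisibility and size conditions hold for every $m\ge1$; working out the exact sub-region dimensions and verifying the hypotheses is the main technical content, and it is here that the choices embedded in the stated side lengths $((a+2b)mac,(b+2a)mbc,c^{3}m)$ and $((a+2b)mab,(b+2a)mb^{2},bc^{2}m)$ become essential.
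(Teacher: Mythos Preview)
Your approach is genuinely different from the paper's, and as written it has real gaps.

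The paper does \emph{not} pass through the equilateral construction at all. For $m=1$ it gives a single explicit decomposition of the $(2\alpha,2\beta,\alpha+\beta)$ triangle into four triangles similar to $(a,b,c)$ (with integer scale factors) together with one ideal trapezoid whose short parallel side is $ac^{2}$; since $ac^{2}>c^{2}-a-b$ and the slant side is a multiple of $ab$, Proposition~\ref{prop:trapezoid-2} applies immediately. The same picture, scaled by $c/b$, gives part~(1). No lower bound on $m$ is needed and no case analysis arises.

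Your route via Theorem~\ref{thm:equilateral-construction-1} incurs two problems. First, that theorem only tiles the equilateral of side $m(a+2b)\cdot ab$ once $m(a+2b)\ge M=3\lceil (c^{2}-a-b)/(ab)\rceil$; saying the remaining small $m$ ``are handled directly'' is not a proof, and for some coprime $(a,b)$ with $c$ integral the inequality $a+2b\ge M$ can fail at $m=1$. Second, and more seriously, your residual triangle $ABD$ has angles $(2\alpha,\beta-\alpha,2\pi/3)$ and sides of order $mb\cdot(\text{quadratic in }a,b)$; placing ``a single $(a,b,c)$ tile'' at the $2\pi/3$ apex does essentially nothing at that scale, and you have not exhibited any decomposition of the remaining region into pieces that actually meet the hypotheses of Lemma~\ref{lem:parallelogram-1} or Proposition~\ref{prop:trapezoid-2}. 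You correctly flag this as ``the main technical content,'' but the proposal supplies none of it --- no candidate trapezoid dimensions, no verification that the slant sides are multiples of $ab$, no check of the $x>c^{2}-a-b$ bound. Without that, the argument is an outline, not a proof.

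If you want to salvage your strategy, the natural fix is to drop the equilateral detour and instead dissect the $(2\alpha,2\beta,\pi/3)$ triangle directly by drawing cevians that create $\gamma=2\pi/3$ angles, producing integer-scaled copies of $(a,b,c)$ plus a single ideal trapezoid --- which is exactly what the paper does.
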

\begin{proof}
For both parts, it suffices to prove the statement for $m=1$. See the top and bottom figures of Figure~\ref{fig:arithmetic-construction} respectively. In both cases, we can tile such a triangle into $4$ triangles similar to $(a,b,c)$ and a single ideal trapezoid with $x = ac^2$ and $y = ab(a-b)$. 

Since $ac^2 > c^2 - a - b$, Proposition~\ref{prop:trapezoid-2} applies and the trapezoid can be tiled by $(a,b,c)$. It is easy to check that the other lengths in the figure make the triangles tileable by $(a,b,c)$. 

For the top figure, since the area of $ABC$ is $\frac{1}{2}|AC||BC|\sin(C)$, we can compute that the number of total tiles is $$\frac{\frac{(a+2b)ac(b+2b)bc\sin(C)}{2}}{\frac{ab\sin(2\pi/3)}{2}} = (a+2b)(b+2a)c^2,$$
and scaling the sides by $m$ would scale the number of tiles by $m^2$. The bottom figure is similar, except with $c$ replaced by $b$ in the computation. 
\end{proof}

Using the second option of Proposition~\ref{prop:arithmetic} for $(3,5,7),$ we obtain:
\begin{cor}
\label{cor:arithmetic} There exists a tiling of a triangle with angles $(2\beta, 2\alpha, \alpha+\beta)$ into $3575$ copies of $(3,5,7)$.
\end{cor}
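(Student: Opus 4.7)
The plan is simply to apply Proposition~\ref{prop:arithmetic}(2) with $(a,b,c) = (3,5,7)$ and $m=1$. This is a direct substitution, so the work amounts to verifying that the hypotheses are satisfied and that the arithmetic yields the claimed number $3575$.

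First I would confirm applicability: with $c = 7$, the law of cosines gives $a^2 + b^2 + ab = 9 + 25 + 15 = 49 = c^2$, so $\gamma = 2\pi/3$ as required, and $(3,5,7)$ is an integer triangle, which is the standing assumption of the paper. Then I would plug $(a,b,c)=(3,5,7)$ and $m=1$ into the formulas of Proposition~\ref{prop:arithmetic}(2): the target triangle has sides
\[
\bigl((a+2b)ab,\ (b+2a)b^2,\ bc^2\bigr) = (13 \cdot 15,\ 11 \cdot 25,\ 5 \cdot 49) = (195,\ 275,\ 245),
\]
and its angles are $(2\alpha, 2\beta, \alpha+\beta)$ (equivalently $(2\beta, 2\alpha, \alpha+\beta)$ after relabeling), which is the desired triangle type.

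Finally, I would count the tiles: Proposition~\ref{prop:arithmetic}(2) gives
\[
(b+2a)(a+2b)m^2 b^2 = 11 \cdot 13 \cdot 1 \cdot 25 = 3575,
\]
which is exactly the claimed count. There is no real obstacle here; the only thing to double-check is that Proposition~\ref{prop:arithmetic}(2) genuinely applies at $m=1$ (the proposition is stated for all $m \geq 1$, and the underlying ideal-trapezoid condition $x = ac^2 > c^2 - a - b$ holds trivially since $a = 3 \geq 1$), so the corollary follows immediately.
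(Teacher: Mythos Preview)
Your proposal is correct and follows exactly the paper's own derivation: the corollary is obtained by applying Proposition~\ref{prop:arithmetic}(2) with $(a,b,c)=(3,5,7)$ and $m=1$, and your arithmetic checks out. The paper does not spell out the numbers, so your explicit verification of the side lengths and the tile count $(b+2a)(a+2b)b^2 = 11\cdot 13\cdot 25 = 3575$ is a welcome addition.
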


\begin{thm}
    \label{thm:arithmetic} For all integers $m > bc-c-b$, there is a $(b+2a)(a+2b)m^2$-tiling of some triangle with angles $(2\alpha, 2\beta, \alpha+\beta)$.
\end{thm}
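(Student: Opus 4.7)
The plan is to interpolate between the two constructions in Proposition~\ref{prop:arithmetic} via a Frobenius argument. Since $(a,b,c)$ are pairwise coprime, in particular $\gcd(b,c) = 1$, so Proposition~\ref{prop:frobenius-2} applied to $b$ and $c$ produces, for every integer $m > bc - b - c$, a representation $m = sb + tc$ with $s, t \in \mathbb{N}_0$. If $s = 0$ then $m = tc$ and the claim follows from Proposition~\ref{prop:arithmetic} part (1) with parameter $t$; the case $t = 0$ follows analogously from part (2). We henceforth assume $s, t \geq 1$.

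Let $T_m$ denote the target triangle, with angles $2\alpha, 2\beta, \alpha+\beta$ at vertices $A, B, C$ and sides of length $ma(a+2b), mb(b+2a), mc^2$ opposite them respectively. I would partition $T_m$ into three pieces. At vertex $A$, place a homothetic copy of $T_m$ of ratio $tc/m$ (a triangle $T_{tc}$, sharing the $2\alpha$-angle at $A$); at vertex $B$, place a homothetic copy of ratio $sb/m$ (a triangle $T_{sb}$, sharing the $2\beta$-angle at $B$). On the side $AB$ of length $mc^2 = (sb + tc)c^2$, the two copies contribute adjacent segments of lengths $tc \cdot c^2$ and $sb \cdot c^2$ that meet at a single interior point of $AB$. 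By homothety, the side of $T_{tc}$ opposite $A$ is parallel to $BC$, and the side of $T_{sb}$ opposite $B$ is parallel to $AC$. The residual region is thus bounded by two segments parallel to $AC$ and two parallel to $BC$, making it a parallelogram; its angle at $C$ inherits the value $\alpha + \beta = \pi/3$ from $T_m$, so it is an \emph{ideal} parallelogram with side lengths $sb \cdot b(b+2a)$ and $tc \cdot a(a+2b)$.

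Tile each piece separately. Proposition~\ref{prop:arithmetic} part (1) with parameter $t$ tiles $T_{tc}$ using $t^2 c^2 (a+2b)(b+2a)$ copies of $(a,b,c)$, and part (2) with parameter $s$ tiles $T_{sb}$ using $s^2 b^2 (a+2b)(b+2a)$ copies. For the parallelogram, the first side equals $b \cdot sb(b+2a)$ and the second equals $a \cdot tc(a+2b)$, so it can be tiled directly by an oblique grid of $tc(a+2b) \cdot sb(b+2a) = stbc(a+2b)(b+2a)$ copies of the basic $(a,b,a,b)$-parallelogram from Lemma~\ref{lem:parallelogram-1} (each composed of two tiles joined along their $c$-side), contributing $2stbc(a+2b)(b+2a)$ tiles; because the sides land exactly in $b\mathbb{N}$ and $a\mathbb{N}$, we bypass the Frobenius step inside Lemma~\ref{lem:parallelogram-1} itself. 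Summing gives $(t^2 c^2 + 2stbc + s^2 b^2)(a+2b)(b+2a) = (sb + tc)^2 (a+2b)(b+2a) = m^2 (a+2b)(b+2a)$, matching the claim. The main step to verify carefully is the identification of the residual region as an ideal parallelogram with the stated side lengths; but once one observes that the far side of each placed copy is parallel to the opposite side of $T_m$, the rest is parallel-transport and arithmetic.
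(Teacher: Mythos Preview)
Your proof is correct and follows essentially the same approach as the paper: write $m = sb + tc$ via Frobenius, place homothetic copies of the $(2\alpha,2\beta,\alpha+\beta)$ triangle (tiled by Proposition~\ref{prop:arithmetic}) at the $2\alpha$- and $2\beta$-vertices, and tile the residual ideal parallelogram directly by the $(a,b,a,b)$ unit. Your write-up is somewhat more explicit than the paper's (handling the $s=0$ or $t=0$ cases, verifying the parallelogram geometry, and summing the tile count), but the decomposition and lemmas invoked are the same.
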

\begin{proof}
    Consider, as in Figure~\ref{fig:arithmetic-tiling-combination}, a $(2\alpha, 2\beta, \alpha+\beta)$-angled triangle with longest edge $(c^2)(bk + ck')$. We can split this into $2$ triangles similar to the original triangle and then a parallelogram with angles $\pi/3$ and $2\pi/3$. Proposition~\ref{prop:arithmetic} shows the triangles can be tiled into copies of $(a,b,c)$. The parallelogram, having sides $b(b+2a)bk$ and $a(a+2b)ck'$ where the first side is divisible by $b$ and the second by $a$, can be tiled by $(a,b,c)$; specifically, by the parallelogram with sides $(a,b,a,b)$ and angles $(2\pi/3, \pi/3, 2\pi/3, \pi/3)$ obtained by putting two copies of $(a,b,c)$ together along $c$. 

Finally, we use Proposition~\ref{prop:frobenius-2} again: since $\gcd(b,c)=1$, all $m \geq bc-c-b$ can be written as some combination $bk + ck'$. This finishes the proof. 
\end{proof}

\begin{figure}
    \centering

\includegraphics[scale=0.3]{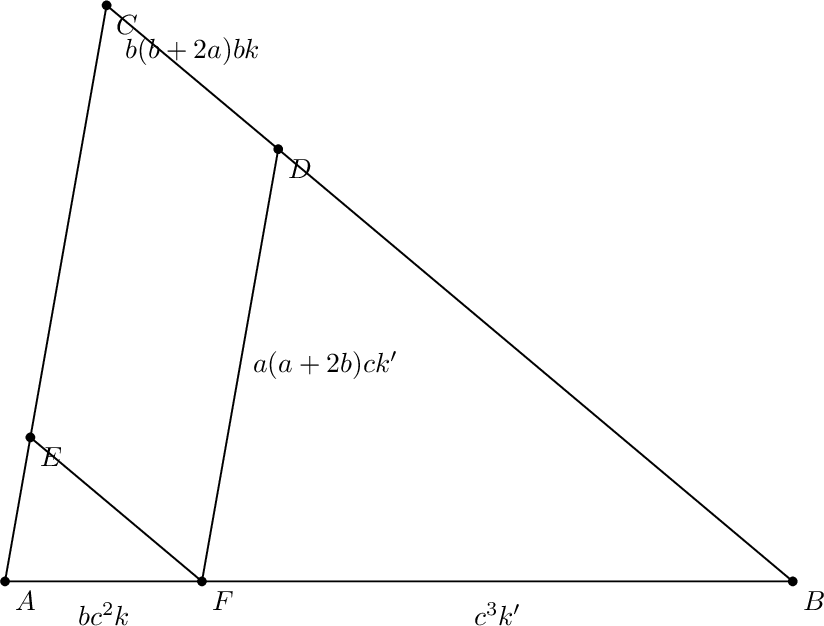}

    \caption{Combining two tilings to make another tiling with bottom side $c^2(bk+ck')$.}
    \label{fig:arithmetic-tiling-combination}
\end{figure}

This construction gives a similar consequence (and conjecture) as with Lemma~\ref{lem:squarefree}:

\begin{lem}
    \label{lem:squarefree-2} If $a \neq b \pmod{3}$, then if some triangle with angles $(2\alpha, 2\beta, \alpha+\beta)$ can be tiled into $N$ tiles, $N = (a+2b)(b+2a)m^2$ for some integer $m$. 
\end{lem}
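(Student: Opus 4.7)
The plan is an integrality argument mirroring Lemma~\ref{lem:squarefree}. Suppose some triangle $T$ with angles $(2\alpha, 2\beta, \alpha+\beta)$ is tiled into $N$ copies of $R = (a,b,c)$. Since $T$'s sides must be in the ratio $(a(b+2a), b(a+2b), c^2)$, we can write them as $(\lambda a(b+2a),\, \lambda b(a+2b),\, \lambda c^2)$ for some $\lambda > 0$. Each side of $T$ is a union of tile-side segments of integer length $a$, $b$, or $c$, so all three of these lengths are integers; in particular $\lambda \in \mathbb{Q}$. An area comparison, identical to the one in the proof of Proposition~\ref{prop:arithmetic}, gives $N = \lambda^2 (a+2b)(b+2a)$. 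It therefore suffices to show that $\lambda$ is an integer.

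Write $\lambda = p/q$ in lowest terms. Integrality of the three sides of $T$ forces $q \mid a(b+2a)$, $q \mid b(a+2b)$, and $q \mid c^2$. Let $\pi$ be any prime dividing $q$. From $\pi \mid c^2$ we get $\pi \mid c$. The relation $c^2 = a^2 + ab + b^2$ together with $\gcd(a,b) = 1$ quickly yields $\gcd(a,c) = \gcd(b,c) = 1$ (if $\pi \mid a$ and $\pi \mid c$, then $\pi \mid b^2$, so $\pi \mid b$). Hence $\pi$ divides neither $a$ nor $b$, and the first two divisibilities collapse to $\pi \mid (b+2a)$ and $\pi \mid (a+2b)$. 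In particular $\pi$ divides $d := \gcd(a+2b,\, b+2a)$.

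The only substantive step that uses the hypothesis is showing $d = 1$. From $d \mid (a+2b)+(b+2a) = 3(a+b)$ and $d \mid (a+2b)-(b+2a) = b-a$, eliminating $a$ and $b$ gives $d \mid 6\gcd(a,b) = 6$. A parity check rules out $2 \mid d$: if $2 \mid a+2b$ then $2 \mid a$, and similarly $2 \mid b+2a$ gives $2 \mid b$, contradicting $\gcd(a,b)=1$. The assumption $a \not\equiv b \pmod 3$ then rules out $3 \mid d$, since $a+2b \equiv a-b \not\equiv 0 \pmod 3$. So $d = 1$, contradicting the existence of $\pi$. Therefore $q = 1$, $\lambda$ is a positive integer $m$, and $N = m^2(a+2b)(b+2a)$ as claimed. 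The only delicate point is extracting pairwise coprimality of $(a,b,c)$ from the law-of-cosines relation, but this is routine; the rest is bookkeeping.
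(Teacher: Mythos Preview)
Your argument is correct and follows essentially the same route as the paper: write the sides as rational multiples of $(a(b+2a),\,b(a+2b),\,c^2)$, then force the scale factor to be an integer by showing $\gcd(a+2b,\,b+2a)=1$ under the hypothesis $a\not\equiv b\pmod 3$. The only cosmetic difference is that you detour through the third side and $\pi\mid c$ to obtain $\pi\nmid a,b$, whereas the paper reads this off directly from $\gcd(a,b+2a)=\gcd(b,a+2b)=1$; your computation $d\mid 6$ followed by ruling out $2$ and $3$ is also slightly longer than the paper's one-line reduction $\gcd(a+2b,b+2a)=\gcd(a+2b,3)$, but the content is identical.
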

\begin{proof}
    Let the sides be $(a+2b)am, (b+2a)bm, c^2m$, where all the sides are integers and $m \in \mathbb{Q}$ (not necessarily $\mathbb{N}$!). Notice that 
    $$\gcd(a, b) = \gcd(a, b+2a) = \gcd(b, a+2b) = 1.$$ 
    Now, $$\gcd(a+2b, b+2a) = \gcd(a+2b, b-a) = \gcd(a+2b, 3b) = \gcd(a+2b, 3).$$
    Since $a \neq b \pmod{3}$, this equals $1$, so $\gcd((a+2b)a, (b+2a)b) = 1$. Since $(a+2b)am$ and $(b+2a)bm$ are both integers, the coprimality we just showed deduces that $m$ must in fact be an integer as well. Thus, $m \in \mathbb{N}$, and the number of tiles is $(a+2b)(b+2a)m^2$.
\end{proof}

\begin{conj}
    For $(a,b,c)$, if there exists a tiling of a $(2\alpha, 2\beta, \alpha+\beta)$-angled triangle, the number of tiles must be of the form $(a+2b)(b+2a)m^2$. Any counterexample would require $a = b \pmod{3}.$
\end{conj}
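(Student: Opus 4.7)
The plan is to prove the conjecture outright by combining Lemma~\ref{lem:squarefree-2} with a short modular-arithmetic observation. The first step is to apply Lemma~\ref{lem:squarefree-2} directly to dispose of the case $a \not\equiv b \pmod{3}$; the entire remaining content of the conjecture is then the case $a \equiv b \pmod{3}$. Rather than strengthening the coprimality argument underlying Lemma~\ref{lem:squarefree-2}, I would argue that, under the standing assumption of Section~\ref{sec:prelims} that $(a,b,c)$ is a primitive (pairwise coprime) integer triple, this residual case is \emph{vacuous}: no such triple with $a \equiv b \pmod{3}$ and $c^2 = a^2 + ab + b^2$ exists.

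Here is the outline of the vacuity step. Pairwise coprimality rules out $3 \mid a$ and $3 \mid b$ simultaneously, so $a \equiv b \pmod{3}$ forces $a \equiv b \equiv \pm 1 \pmod{3}$. In either case $a^2, b^2, ab$ are all $\equiv 1 \pmod{3}$, so $3 \mid c^2$ and hence $3 \mid c$. The refined claim is that $a^2 + ab + b^2 \equiv 3 \pmod{9}$: writing $a = 3k \pm 1$ and $b = 3\ell \pm 1$ with matching signs, every cross and squared term contributes a multiple of $9$, leaving a constant of $3$. But $3 \mid c$ forces $9 \mid c^2$, while the quadratic residues modulo $9$ are $\{0,1,4,7\}$, which does not include $3$. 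This contradiction closes the residual case.

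The main obstacle is conceptual rather than technical: one must carefully confirm that the pairwise-coprime convention from Section~\ref{sec:prelims} (``in fact, pairwise prime'') actually governs the statement of the conjecture. If it does, the plan above promotes the conjecture to a theorem with essentially no new geometric input, and the second sentence of the conjecture statement is seen to be not merely a comment on the limit of the proof method but in fact a vacuous condition. For non-primitive triples, one would first scale down by $\gcd(a,b,c)$ to a primitive triple; the mod-$9$ obstruction then applies unchanged to the primitive triple, and the required form $(a+2b)(b+2a)m^2$ for $N$ follows after absorbing the common factor back into $m$. A secondary sanity check is that the area identity $N = \lambda^2(a+2b)(b+2a)$ used in Lemma~\ref{lem:squarefree-2} is compatible with this rescaling, which is immediate since both sides scale homogeneously.
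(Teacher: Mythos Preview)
Your argument is correct, and in fact it goes strictly beyond what the paper does: the paper leaves this statement as an open conjecture, proving only the second sentence (via Lemma~\ref{lem:squarefree-2}) and explicitly flagging the case $a\equiv b\pmod 3$ as the potential source of counterexamples. Your mod-$9$ observation closes that gap entirely. Concretely, under the paper's standing convention that $(a,b,c)$ are pairwise coprime integers with $c^2=a^2+ab+b^2$, the condition $a\equiv b\pmod 3$ forces $a\equiv b\equiv\pm 1\pmod 3$, whence $a^2+ab+b^2\equiv 3\pmod 9$ while $c^2\equiv 0\pmod 9$; this is impossible, so the residual case is vacuous and Lemma~\ref{lem:squarefree-2} already covers every admissible tile. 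The conjecture is thereby promoted to a theorem.

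Two minor comments. First, your discussion of non-primitive triples is not needed and is also not quite right as stated: if $d=\gcd(a,b,c)$ and the primitive triple yields $N=(a'+2b')(b'+2a')m'^2$, then writing $N$ in the form $(a+2b)(b+2a)m^2$ requires $m=m'/d$, which is not obviously an integer; you would need a separate argument for that. But since the paper's convention already normalises to pairwise coprime $(a,b,c)$, this digression can simply be dropped. Second, it is worth recording that your observation also sharpens the paper's narrative around Lemma~\ref{lem:squarefree-2}: the hypothesis $a\not\equiv b\pmod 3$ there is automatically satisfied for every tile under consideration, so the lemma is in fact unconditional.
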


\section{Other Constructions}
\label{sec:construction}

We have explored equilateral triangles and $(2\alpha, 2\beta, \alpha+\beta)$. In this section, we show that these constructions can be used to construct families for the other $4$ potential incommensurable-angles and non-reptile cases that tile into $(\alpha, \beta, \gamma = 2\pi/3)$ triangles. Recall that these have angles:
\begin{itemize}
    \item isosceles $(\alpha, \alpha, \pi-2\alpha)$; 
    \item $(\alpha, \alpha+\beta, \alpha+2\beta)$;
    \item $(\alpha, 2\beta, \beta+2\alpha)$;
    \item $(\alpha, 2\alpha, 3\beta)$;
\end{itemize}
(for the purpose of this list, each item includes the variation where $\alpha$ and $\beta$ are swapped; for example, the $(\alpha, \alpha, \pi-2\alpha)$ case also includes $(\beta, \beta, \pi-2\beta)$). For the first three we will use the equilateral triangle as an auxiliary tool. For the final case we will use the $(2\alpha, 2\beta, \alpha+\beta)$ triangle.

\begin{prop}
    \label{prop:isosceles} If $mab$ is equiconstructible by $(a,b,c)$, then:
    \begin{enumerate}
        \item we can tile $(mbc, mbc, mb(a+2b))$, which has angles $(\alpha, \alpha, \pi-2\alpha)$, into $m^2b(a+2b)$ copies of $(a,b,c)$.
        \item we can tile $(mac, mac, ma(b+2a))$, which has angles $(\beta, \beta, \pi-2\beta)$, into $m^2a(b+2a)$ copies of $(a,b,c)$.
    \end{enumerate}
\end{prop}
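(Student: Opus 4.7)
The plan is to place two scaled copies of the tile at the two base corners of the isosceles triangle, leaving a central equilateral triangle that can be tiled by the hypothesis.

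Concretely, for part (1), consider the triangle $PQR$ with $PQ = PR = mbc$ and base $QR = mb(a+2b)$, so that the angle at $P$ is $\pi - 2\alpha$ and the angles at $Q, R$ are $\alpha$. At $Q$, I place a triangle similar to $(a,b,c)$ scaled by $mb$ — thus with sides $(mab, mb^2, mbc)$ — positioning the $\alpha$ angle at $Q$, the side of length $mb^2$ (adjacent to $\alpha$, opposite $\beta$) along $QR$, and the side of length $mbc$ (adjacent to $\alpha$, opposite $\gamma$) along $QP$. Since $|QP| = mbc$, the scaled tile's third vertex lands exactly at $P$. I do the analogous placement at $R$, putting a second scaled copy along $RP$ and $RQ$.

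The key verification is that what remains is an equilateral triangle of side $mab$. The base point at distance $mb^2$ from $Q$ (call it $Q'$) and similarly $R'$ leave a middle segment $Q'R'$ of length $mb(a+2b) - 2mb^2 = mab$. Along $Q'R'$, the interior of each placed tile contributes its $\gamma = 2\pi/3$ angle at $Q'$ and at $R'$, so the remaining interior angle on each side is $\pi - 2\pi/3 = \pi/3$. At $P$, each placed tile contributes its angle $\beta$ (opposite the scaled $b$-side), leaving an interior angle $(\pi - 2\alpha) - 2\beta = \pi/3$ because $\alpha + \beta = \pi/3$. The three leftover sides $PQ'$, $PR'$, $Q'R'$ each have length $mab$ (the first two are the $a$-sides of the scaled tiles, of length $mb \cdot a$). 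So $PQ'R'$ is equilateral with side $mab$, and by hypothesis it is tileable by $(a,b,c)$.

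Counting tiles: each scaled-by-$mb$ copy of $(a,b,c)$ is a reptile tiling into $(mb)^2 = m^2 b^2$ copies, and the equilateral piece uses $(mab)^2/(ab) = m^2 ab$ copies, giving a total of $2m^2 b^2 + m^2 ab = m^2 b(a+2b)$, which matches the claim. Part (2) follows by the symmetric construction with $a$ and $b$ swapped (equivalently $\alpha$ and $\beta$ swapped), yielding the triangle $(mac, mac, ma(b+2a))$ tiled into $m^2 a(b+2a)$ copies. The main subtlety is verifying the angle bookkeeping at $P$, which relies essentially on the identity $\alpha + \beta = \pi/3$ coming from $\gamma = 2\pi/3$; everything else is a routine side-length check.
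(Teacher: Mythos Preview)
Your proof is correct and follows essentially the same approach as the paper: decompose the isosceles triangle into two copies of $(a,b,c)$ scaled by an integer factor (placed at the base corners) together with a central equilateral triangle of side $mab$, then invoke the equiconstructibility hypothesis. The paper's argument is terser (it appeals to a figure and proves part (2) first, obtaining part (1) by swapping $a$ and $b$), but your more explicit angle and side-length bookkeeping is a faithful expansion of the same construction.
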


\begin{figure}
    \centering

\includegraphics[scale=0.4]{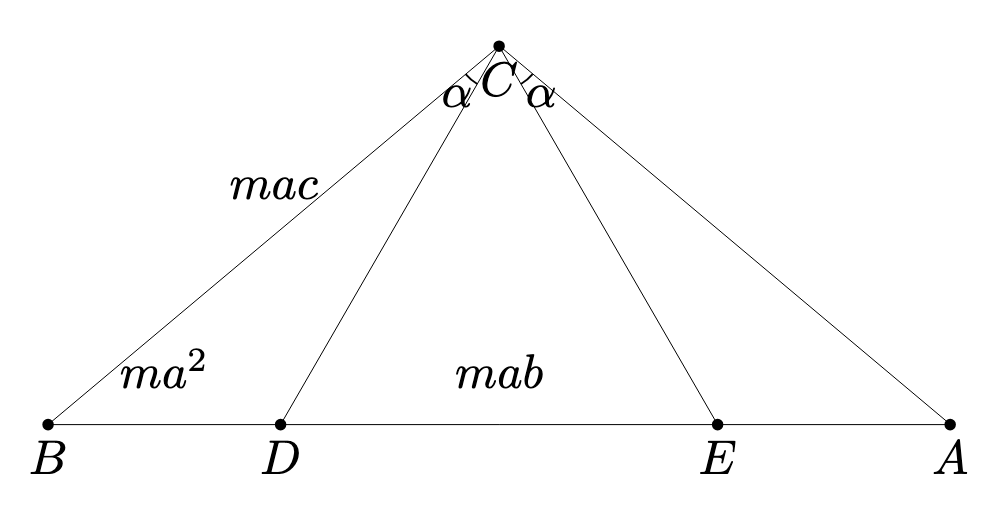}
    
    \caption{Isosceles triangle with sides $(mac, mac, ma(b+2a)).$}
    \label{fig:iscosceles-construction}
\end{figure}

\begin{proof}
    As in Figure~\ref{fig:iscosceles-construction}, we can tile such a triangle into 2 triangles similar to $(a,b,c)$ (with scaling factor $r$) and one equilateral triangle with side $rb$. Suppose $r = ma$, with $m$ an integer. Then $rb = mab$ is equiconstructible, and we can tile all three triangles by a total of $$m^2(ab) + 2m^2a^2 = m^2a(b+2a)$$
    copies of $(a,b,c)$. By replacing the roles of $a$ and $b$, we can get a similar tiling with $m^2b(a+2b)$ instead.
\end{proof}

\begin{prop}
    \label{prop:arithmetic-2-construction} If $mab$ is equiconstructible by $(a,b,c)$, then:
    \begin{enumerate}
        \item we can tile $(mab, mbc, mb(a+b))$, which has angles $(\alpha, \alpha+\beta, \alpha+2\beta)$, into $m^2b(a+b)$ copies of $(a,b,c)$.
        \item we can tile $(mab, mac, ma(a+b))$, which has angles $(\beta, \alpha+\beta, \beta+2\alpha)$, into $m^2a(a+b)$ copies of $(a,b,c)$.
    \end{enumerate}
\end{prop}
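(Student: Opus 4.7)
The plan is to mirror the strategy of Proposition~\ref{prop:isosceles}: decompose the target triangle into an equilateral triangle of side $mab$ plus one triangle similar to $(a,b,c)$ with an integer scale factor, and then invoke the equiconstructibility hypothesis on the equilateral piece (the similar piece automatically tiles into $(a,b,c)$ since the scaling is integral). The key observation is that $\gamma=2\pi/3$ forces $\alpha+\beta = \pi/3$, so the ``middle'' angle $\alpha+\beta$ of each target triangle is actually $\pi/3$---exactly the equilateral angle---which strongly hints that an equilateral subtriangle can be split off from that vertex.

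For part (1), I would label the target vertices $O, P, R$ with $\angle O = \alpha+\beta = \pi/3$, $\angle P = \alpha$, $\angle R = \alpha+2\beta$, so that $|OR| = mab$ (opposite $\alpha$) and $|OP| = mb(a+b)$ (opposite $\alpha+2\beta$). I would then draw the cevian $RU$, where $U$ is the point on $OP$ with $|OU| = mab$. Since $|OU| = |OR|$ and $\angle UOR = \pi/3$, the triangle $OUR$ is equilateral of side $mab$, hence tileable by hypothesis into $m^2 ab$ copies of $(a,b,c)$. The remaining triangle $URP$ has $|UP| = mb(a+b) - mab = mb^2$, and a short angle chase (using $\alpha+\beta = \pi/3$ and the fact that $\angle RUP$ is supplementary to $\angle RUO = \pi/3$) shows its angles are $(2\pi/3, \alpha, \beta)$. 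So $URP$ is similar to $(a,b,c)$ with integer scale $mb$, contributing $m^2 b^2$ tiles. Summing gives $m^2 ab + m^2 b^2 = m^2 b(a+b)$ tiles total, as claimed.

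Part (2) is the mirror construction with the roles of $a$ and $b$ swapped: in the triangle with sides $(mab, mac, ma(a+b))$ the angle $\alpha+\beta$ again equals $\pi/3$, so the analogous cevian peels off an equilateral piece of side $mab$ and leaves a $(a,b,c)$-similar triangle of integer scale $ma$, giving $m^2 ab + m^2 a^2 = m^2 a(a+b)$ tiles.

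The only real obstacle is guessing the right cevian; once $U$ is specified by $|OU| = mab$, everything reduces to checking three side lengths and three angles, each of which follows immediately from the Law of Sines together with $\alpha+\beta = \pi/3$. A figure analogous to Figure~\ref{fig:iscosceles-construction} would make the decomposition self-evident.
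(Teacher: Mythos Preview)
Your proposal is correct and matches the paper's approach essentially verbatim: the paper also splits the target triangle along a cevian into an equilateral triangle of side $mab$ (contributing $m^2ab$ tiles) and a triangle similar to $(a,b,c)$ with scale factor $mb$ (contributing $m^2b^2$ tiles), then adds. Your write-up is in fact more explicit than the paper's, which leans on its Figure~\ref{fig:arithmetic-2-construction} for the decomposition and omits the angle chase.
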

\begin{figure}
    \centering
        
\includegraphics[scale=0.3]{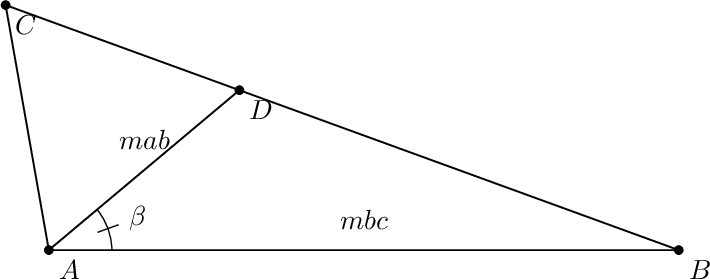}
    
    \caption{Triangle with angles $(\alpha, \alpha + \beta, \alpha + 2\beta)$.}
    \label{fig:arithmetic-2-construction}
\end{figure}

\begin{proof}
We do the first case (the second case is symmetric). As in Figure~\ref{fig:arithmetic-2-construction}, we can tile such a triangle into an equilateral triangle and a triangle similar to $(a,b,c)$. If we set $AD = mab$, this tiles the entire triangle into $m^2ab$ (from $ACD$) plus $m^2b^2$ (from $ABD$) copies of $(a,b,c),$ for a total of $m^2b(a+b)$.
\end{proof}

\begin{prop}
    \label{prop:odd-construction} If $mab$ is equiconstructible, then:
    \begin{enumerate}
        \item we can tile the triangle $(mac, (b+2a)mb, (a+b)cm)$ with angles $(\alpha, 2\beta, 2\alpha+\beta)$ into $m^2(b+2a)(a+b)$ copies of $(a,b,c)$;
        \item we can tile the triangle $(mbc, (a+2b)ma, (a+b)cm)$ with angles $(\beta, 2\alpha, \alpha+2\beta)$ into $m^2(a+2b)(a+b)$ copies of $(a,b,c)$;
    \end{enumerate} 
\end{prop}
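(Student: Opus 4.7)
The plan is to mimic the cevian-dissection idea used in Propositions~\ref{prop:isosceles} and \ref{prop:arithmetic-2-construction}: cut the given triangle into a piece similar to $(a,b,c)$ plus a triangle whose shape is already handled by Proposition~\ref{prop:arithmetic-2-construction}, then add up the tile counts.

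For part (1), label the triangle $ABC$ with $\angle A = \alpha$, $\angle B = 2\beta$, $\angle C = 2\alpha+\beta$, so that $BC = mac$, $AC = (b+2a)mb$, and $AB = (a+b)cm$. I would drop a cevian from $B$ to a point $D$ on $AC$ that bisects $\angle B$ into $\beta+\beta$. Then triangle $ABD$ has angles $(\alpha, \beta, \pi-\alpha-\beta) = (\alpha,\beta,2\pi/3)$, so it is similar to $(a,b,c)$, while triangle $BDC$ has angles $(\beta, \pi/3, 2\alpha+\beta)$, which is precisely the shape tiled by case (2) of Proposition~\ref{prop:arithmetic-2-construction}.

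The routine step is to chase side lengths with the law of sines, using $\sin\alpha:\sin\beta:\sin(2\pi/3) = a:b:c$. Applied to $ABD$ I would get $AD = (a+b)mb$ and $BD = (a+b)ma$, so $ABD$ is $(a,b,c)$ scaled by the integer $(a+b)m$ and contributes $(a+b)^2 m^2$ tiles. Subtracting gives $DC = AC - AD = (b+2a)mb - (a+b)mb = mab$, so $BDC$ has sides $(mab,\, mac,\, ma(a+b))$, exactly the format of Proposition~\ref{prop:arithmetic-2-construction}(2) with the same integer $m$; since $mab$ is equiconstructible, this piece is tiled using $m^2 a(a+b)$ copies. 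Summing, the total tile count is $m^2(a+b)^2 + m^2 a(a+b) = m^2(a+b)(b+2a)$, as claimed.

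Part (2) is the mirror image, obtained by swapping $\alpha \leftrightarrow \beta$ (and hence $a \leftrightarrow b$) throughout: drop a cevian from the $2\alpha$ vertex that splits it as $\alpha+\alpha$, producing one $(a,b,c)$-similar piece scaled by $(a+b)m$ and one triangle in the form required by case~(1) of Proposition~\ref{prop:arithmetic-2-construction}. I do not expect any serious obstacle here; the only thing to be careful about is the bookkeeping of the law-of-sines computations to confirm that the two sub-triangles have the exact integer dimensions needed to invoke Proposition~\ref{prop:arithmetic-2-construction} (the fact that $a+b$, $b+2a$, $a+2b$ are all integers ensures this). Adding the two tile counts yields $m^2(a+b)^2 + m^2 b(a+b) = m^2(a+b)(a+2b)$, matching the statement.
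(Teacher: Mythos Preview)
Your proof is correct and follows essentially the same decomposition as the paper: a cevian from the $2\beta$ vertex splitting the triangle into one piece similar to $(a,b,c)$ at scale $m(a+b)$ and one piece of the form handled by Proposition~\ref{prop:arithmetic-2-construction}(2), with the same tile counts $m^2(a+b)^2 + m^2 a(a+b)$. The only difference is cosmetic (your vertex labels $A$ and $C$ are swapped relative to the paper's figure), and you supply the explicit angle-bisector description and law-of-sines bookkeeping that the paper leaves implicit.
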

\begin{figure}
    \centering

    \begin{tikzpicture}[scale=0.7]
\tkzDefPoint(0,0){A}
\tkzDefPoint(4,0){B}
\begin{scope}[shift=(A)]
\tkzDefPoint(85:15){X}
\end{scope}
\begin{scope}[shift=(B)]
\tkzDefPoint(110:15){Y}
\tkzDefPoint(145:15){Z}
\end{scope}
\tkzInterLL(A,X)(B,Y)
\tkzGetPoint{C}
\tkzInterLL(A,C)(B,Z)
\tkzGetPoint{D}
\tkzDrawPoints(A,B,C,D)
\tkzLabelPoints(A,B,C,D)
\tkzDrawPolySeg(D,A,B,D,C,B)
\tkzMarkAngle[arc=l,size=0.4](D,B,A)
\tkzLabelAngle[pos=0.6](D,B,A){$\beta$}
\tkzMarkAngle[arc=l,size=0.4](B,A,C)
\tkzLabelAngle[pos=0.6](B,A,C){$2\alpha+\beta$}
\tkzMarkAngle[arc=l,size=0.6](D,C,B)
\tkzLabelAngle[pos=0.9](D,C,B){$\alpha$}
\tkzMarkAngle[arc=l,size=0.6](C,B,D)
\tkzLabelAngle[pos=0.9](C,B,D){$\beta$}

\tkzLabelSegments[color=black,above=4pt](A,D){$mab$}
\tkzLabelSegments[color=black,above=4pt](C,D){$mb(a+b)$}
\tkzLabelSegments[color=black,above=4pt](B,D){$ma(a+b)$}
\tkzLabelSegments[color=black,below=4pt](A,B){$mac$}
\end{tikzpicture} 

    \caption{Triangle with angles $(\alpha, 2\beta, 2\alpha+\beta$).}
    \label{fig:odd-construction}
\end{figure}
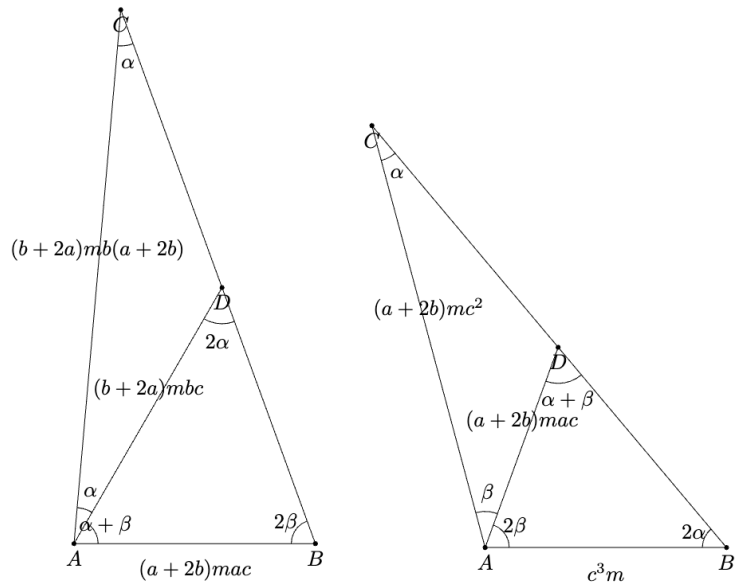

\begin{proof}
We will just do the first case; the second case is symmetric. As in Figure~\ref{fig:odd-construction}, we can tile such a triangle into a $(ma(a+b), mb(a+b), mc(a+b))$ triangle $BCD$ and a $(mab, mac, ma(a+b))$ triangle $ABD$.

Triangle $ABD$ satisfies the assumptions of Proposition~\ref{prop:arithmetic-2-construction} since $mab$ is equiconstructible. This corresponds to $m^2a(a+b)$ tiles. Triangle $BCD$ is just a $m(a+b)$-scaled copy of the $(a,b,c)$ triangle, so it offers $m^2(a+b)^2$ tiles. In total, this gives $m^2(a+b)(2a+b)$ tiles.
\end{proof}

\begin{prop}
    \label{prop:triple-angle-construction}
If the $((a+2b)am, (b+2a)bm, c^2m)$ triangle tiles into $(a,b,c)$, then
\begin{enumerate}
\item we can tile the triangle $(c^2m, (a+2b)mc, 3(a+b)mb)$ with angles $(\alpha, 2\alpha, 3\beta)$ into $3m^2(a+2b)(a+b)$ copies of $(a,b,c)$.
\item we can tile the triangle $(c^2m, (b+2a)mc, 3(a+b)ma)$ with angles $(\beta, 2\beta, 3\alpha)$ into $3m^2(2a+b)(a+b)$ copies of $(a,b,c)$.    
\end{enumerate}
\end{prop}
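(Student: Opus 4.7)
The plan is to mimic the decomposition strategy used in Propositions~\ref{prop:isosceles}, \ref{prop:arithmetic-2-construction}, and \ref{prop:odd-construction}: drop a single cevian inside the target triangle so that it splits into two pieces, each of which is handled by a tool we have already built.

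For the first part, label the target triangle $ABC$ with $\angle A = \alpha$, $\angle B = 2\alpha$, $\angle C = 3\beta$, and opposite sides $BC = c^2m$, $CA = (a+2b)mc$, $AB = 3(a+b)mb$. I would take the cevian $CD$ from $C$ to a point $D$ on $AB$ with $\angle ACD = \beta$, which forces $\angle BCD = 2\beta$. Since $\alpha+\beta = \pi/3$, triangle $ACD$ then has angles $(\alpha, \beta, 2\pi/3)$ (directly similar to $(a,b,c)$), and triangle $BCD$ has angles $(2\alpha, 2\beta, \alpha+\beta)$, matching the auxiliary triangle of Proposition~\ref{prop:arithmetic}.

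Next, applying the law of sines in $ACD$ starting from $CA = (a+2b)mc$ opposite the $2\pi/3$ angle at $D$ yields similarity ratio $(a+2b)m$, so $CD = (a+2b)ma$ and $AD = (a+2b)mb$. Then $DB = AB - AD = 3(a+b)mb - (a+2b)mb = (b+2a)mb$, and the sides of $BCD$ are $(CD, DB, BC) = ((a+2b)ma, (b+2a)mb, c^2m)$ — exactly the triangle whose tileability is hypothesized. Thus $ACD$ tiles as a reptile into $(a+2b)^2 m^2$ copies of $(a,b,c)$, $BCD$ tiles into $(a+2b)(b+2a)m^2$ copies by hypothesis, and their sum is $(a+2b)\bigl((a+2b)+(b+2a)\bigr)m^2 = 3(a+2b)(a+b)m^2$, as claimed. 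The second case is symmetric under swapping $\alpha \leftrightarrow \beta$ (equivalently $a \leftrightarrow b$), using the cevian from the $3\alpha$-vertex making angle $\alpha$ with the adjacent side.

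The main obstacle — really more bookkeeping than geometry — is confirming that the stated side lengths $(c^2m, (a+2b)mc, 3(a+b)mb)$ are actually consistent with angles $(\alpha, 2\alpha, 3\beta)$ under the law of sines. This reduces to the identities $2\cos\alpha = (a+2b)/c$ and $\sin 3\beta / \sin\alpha = 3(a+b)b/c^2$, both of which drop out of applying the law of cosines to $c^2 = a^2+b^2+ab$ together with a short trigonometric expansion (noting that $\sin 3\beta = \sin 3\alpha$ since $3\alpha + 3\beta = \pi$). Once these ratios are verified, the rest of the construction fits together by design, and one should also check briefly that $D$ lies strictly between $A$ and $B$, which is immediate from $AD, DB > 0$.
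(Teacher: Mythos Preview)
Your argument is correct and matches the paper's approach: the paper's proof is essentially a one-line pointer to a figure showing exactly this cevian decomposition into a scaled copy of $(a,b,c)$ and the hypothesized $(2\alpha,2\beta,\alpha+\beta)$ triangle, with the details declared routine. Your write-up supplies those details (the length chase and the tile count) carefully, and your side-length consistency check via $2\cos\alpha=(a+2b)/c$ is a nice touch the paper leaves implicit.
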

\begin{figure}
    \centering
    
\includegraphics[scale=0.3]{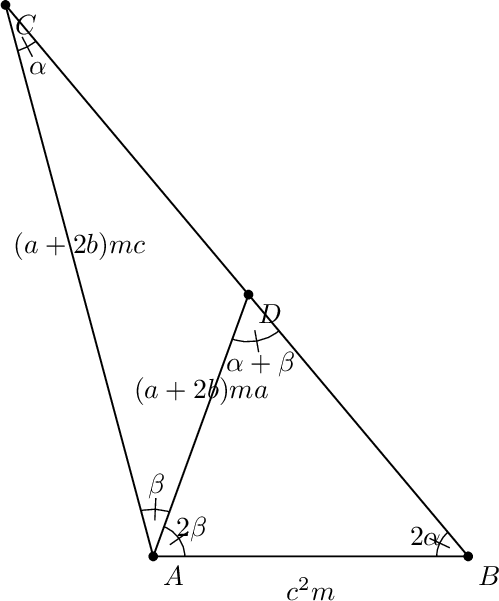}

\caption{Triangle with angles $(\alpha, 2\alpha, 3\beta)$.}
    \label{fig:triple-angle}
\end{figure}

\begin{proof}
See Figure~\ref{fig:triple-angle}. The assumptions make both triangles tileable into $(a,b,c)$. The other details are routine and similar to earlier proofs.    
\end{proof}


\begin{table}[]
    \centering
    \begin{tabular}{c|c|c}
       Angles & $N$'s via our Construction & Smallest such $N$ \\
\hline
$(\alpha+\beta, \alpha+\beta,\alpha+\beta)$ 
         & $m^2ab$, $m \geq 9$ & $1215$ (known) \\
         \hline
$(\beta, \beta,\pi-2\beta)$ 
         & $m^2a(b+2a)$, $m \geq 9$ & $2673$ (known) \\
$(\alpha, \alpha,\pi-2\alpha)$ 
         & $m^2b(a+2b)$, $m \geq 9$ & $5265$ \\
         \hline
$(\alpha, \alpha+\beta,\alpha+2\beta)$ 
         & $m^2b(a+b)$, $m \geq 9$ & $3240$ \\
$(\beta, \alpha+\beta, 2\alpha+\beta)$ 
         & $m^2a(a+b)$, $m \geq 9$ & $1944$ \\
\hline         
$(\alpha, 2\beta,2\alpha+\beta)$ 
         & $m^2(b+2a)(a+b)$, $m \geq 9$ & $7128$ \\
$(\beta, 2\alpha,\alpha+2\beta)$ 
         & $m^2(a+2b)(a+b)$, $m \geq 9$ & $8424$ \\
         \hline
$(2\beta, 2\alpha,\alpha+\beta)$ 
         & $m^2(a+2b)(b+2a)$, $m \in 5\mathbb{N}_0 + 7\mathbb{N}_0$ & $3575$ \\
         \hline
$(\alpha, 2\alpha, \pi-3\alpha)$ &$3m^2(a+2b)(a+b)$, $m \in 5\mathbb{N}_0 + 7\mathbb{N}_0$ & $7800$ \\
$(\beta, 2\beta, \pi-3\beta)$ & $3m^2(2a+b)(a+b)$, $m \in 5\mathbb{N}_0 + 7\mathbb{N}_0$ & $6600$ \\
\hline
\makecell{ $(\alpha+\beta, \alpha+\beta,\alpha+\beta)$ \\
into $(\alpha, \beta, \pi/3)$}
         & $m^2ab$, $m \geq 6$ & $1440$ (known) \\
    \end{tabular}
    \caption{Constructions for all the tilings using $(a,b,c) = (3,5,7)$ for all but the last row and $(a,b,c) = (5,8,7)$ for the last row. The constructions marked ``known'' were known to Herdt (private communication, \cite{beeson-equilateral}, and \cite{beeson-isosceles}).}
    \label{tab:incommensurable-constructions}
\end{table}

\section{Conclusion and Future Work}
\label{sec:conclusion}

Our main contribution is focusing on \textbf{ideal trapezoids} as intermediate objects in a tiling. Our constructions give conjectured solutions to half of the remaining cases of (an extension of) our motivating Erd\"{o}s problem as listed in Figure~\ref{fig:beeson-incommensurable}. To make our intuition explicit:
\begin{enumerate}
    \item We conjecture that in these cases, $T$ can be partitioned into two types of "macro-tiles": ideal trapezoids and triangles similar to $R$, as in Figures~\ref{fig:equilateral-construction-1}, \ref{fig:arithmetic-construction}, etc.
    \item We propose that the tileability of an ideal trapezoid essentially comes down to Proposition~\ref{prop:trapezoid-2}. 
\end{enumerate}

With specific tiles $(3,5,7)$ and $(5,8,7)$, we demonstrate our results in Table~\ref{tab:incommensurable-constructions}. 

The most ``obvious'' next step is to study and prove Conjecture~\ref{conj:equilateral}; all the other Conjectures come from the same root. As an intermediate step, it would be interesting (and probably required) to prove the converse to Proposition~\ref{prop:trapezoid-2}. That is,
\begin{conj}
    \label{conj:trapezoid}
    Let $ABCD$ be an ideal trapezoid and $(a,b,c)$ be as required from our setup (that is, pairwise coprime integers with $c^2 = a^2 + b^2 + ab)$. Then $ABCD$ can be tiled by $(a,b,c)$ if and only if $x > c^2-a-b$ and $(ab)|y$.
\end{conj}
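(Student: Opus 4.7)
Proposition~\ref{prop:trapezoid-2} already supplies the direction ``conditions $\Rightarrow$ tileable'', so the task is the converse: assuming $ABCD$ is tiled by congruent $(a,b,c)$ triangles, show that both $(ab)\mid y$ and $x > c^2-a-b$ hold. My plan is to attack these two consequences together, combining a global area count with rigid local information at the two $\pi/3$ corners $A$ and $B$.

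The starting global fact is the area identity: any tiling uses exactly $N = y(2x+y)/(ab)$ tiles, so $ab \mid y(2x+y)$. This is strictly weaker than $(ab)\mid y$, and the extra strength must come from local structure. Since we are in the incommensurable regime (the only $\mathbb{Q}$-linear relation among $\alpha, \beta, \pi$ is $\alpha+\beta=\pi/3$), each $\pi/3$ corner can only be filled by exactly one $\alpha$-vertex together with one $\beta$-vertex sharing their common $c$-edge; this forces an $(a,b,a,b)$-parallelogram into each of the two corners of $ABCD$. Analogous finite-type constraints classify the admissible completions of each $\pi$ point along the boundary. I would package this into a bookkeeping system on the sequence of $\{a,b,c\}$-labeled edge segments and $\{\alpha,\beta,\gamma\}$-labeled vertex types appearing along each of the four sides of $ABCD$, subject to the incommensurability-forced relations at every boundary vertex. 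Summing these relations around the boundary and combining with the area divisibility should extract $(ab)\mid y$.

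For the lower bound $x > c^2-a-b$, the plan is to propagate the forced corner parallelogram at $A$ into a rigid ``column'' of tiles running up to the short side $DC$, in the spirit of Lemma~\ref{lem:trapezoid-1} and Proposition~\ref{prop:trapezoid-2}. The layer of tiles adjacent to $DC$ should contribute a segment of length exactly $a^2+b^2$, and the remainder of $DC$ must decompose into $a$- and $b$-segments of tiles along the top boundary. Proposition~\ref{prop:frobenius-2} then supplies the Frobenius obstruction: the residual length $x-(a^2+b^2)$ must lie in $a\mathbb{N}_0+b\mathbb{N}_0$, and $x > c^2-a-b$ is exactly the threshold that guarantees $x-(a^2+b^2) > ab-a-b$; below it, there are residues for which the Frobenius condition fails, so any tileable $ABCD$ must satisfy $x > c^2-a-b$.

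The chief obstacle is upgrading local rigidity to global rigidity: the $\pi/3$ corners force tiles unambiguously in their immediate neighborhoods, but once a few layers of tiles are placed the newly-exposed $2\pi/3$ and $\pi$ corners admit several completions, including many using the flexible $(a,b,a,b)$ parallelograms of Lemma~\ref{lem:parallelogram-1}. Ruling out ``exotic'' interior tilings that evade the trapezoid-plus-parallelogram stratification of Figure~\ref{fig:ideal-trapezoid-complex} is the real technical content. I would attempt either (i) an induction on the tile count, peeling off the forced corner parallelogram at $A$ and reducing to a smaller ideal-trapezoid-like region in which both conjectured conditions are preserved, or (ii) a Dehn-type invariant valued in a $\mathbb{Q}$-vector space generated by $\alpha$ and $\beta$, engineered so that its value on any tiled ideal trapezoid depends only on $(x,y)$ and vanishes only when $(ab)\mid y$. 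Designing such an invariant tailored to the $\pi/3$-angle boundary of an ideal trapezoid is, I expect, where most of the technical work would sit.
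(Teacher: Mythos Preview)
This statement is labelled a \emph{conjecture} in the paper and is not proved there; the authors explicitly flag it as future work (``it would be interesting \ldots\ to prove the converse to Proposition~\ref{prop:trapezoid-2}''). So there is no paper proof to compare your proposal against, and what you have written is an attack on an open problem rather than a reconstruction of an existing argument.

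On the substance: the ingredients you list for the $(ab)\mid y$ direction (the area count $N = y(2x+y)/(ab)$, forced corner parallelograms, boundary bookkeeping, a Dehn-type invariant) are reasonable starting points, but as you yourself acknowledge, the step from local rigidity at the two $\pi/3$ corners to a global conclusion is the real content, and neither of your two suggested strategies is carried out. More seriously, your argument for the bound $x > c^2-a-b$ does not work. You invoke Proposition~\ref{prop:frobenius-2} as an obstruction, but that proposition is a \emph{sufficiency} statement: every integer above $ab-a-b$ lies in $a\mathbb{N}_0 + b\mathbb{N}_0$. It supplies no necessary condition, since many smaller integers (for instance $0$, $a$, $b$, $a+b$) also lie in that semigroup. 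Even granting your claimed rigidity along the short side, a decomposition of $x-(a^2+b^2)$ into nonnegative $a$'s and $b$'s does not force $x > c^2-a-b$. In fact the ``only if'' direction as literally stated is already contradicted by Lemma~\ref{lem:trapezoid-1}: for $(a,b,c)=(3,5,7)$ that lemma tiles an ideal trapezoid with $x = a^2+b^2 = 34$ and $y = ab = 15$, whereas $c^2-a-b = 41 > 34$. So the conjecture itself needs amendment (presumably replacing the Frobenius-style threshold on $x$ by an exact semigroup membership condition) before any proof of the converse can succeed, and your proposal does not catch this.
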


\section*{Acknowledgments}
We thank Boris Alexeev, Bryce Herdt, Ariel Schreiman, and Wasin So for valuable conversation. We especially thank Michael Beeson for his generous help navigating us through the literature (including his unpublished work) and for correcting mistakes in our figures in an earlier draft.

\bibliographystyle{abbrv}
\bibliography{bibliography}

\end{document}